\titleformat*{\section}{\large\bfseries\centering}
\newtheorem{thm}{Theorem}
\newtheorem{lem}{Lemma}
\newtheorem{pro}{Proposition}
\newcommand{\bd}{{\rm bd}}
\newcommand{\diam}{{\rm diam}}
\theoremstyle{remark}
\date{}
\newcommand\blfootnote[1]{%
  \begingroup
  \renewcommand\thefootnote{}\footnote{#1}%
  \addtocounter{footnote}{-1}%
  \endgroup}
\begin{document}

\title{\textbf{REDUCED SPHERICAL CONVEX BODIES}}

\author{MAREK LASSAK   and MICHA\L \  MUSIELAK \\
\\
Institute of Mathematics and Physics,  University of Science and \\ Technology, Kaliskiego 7, Bydgoszcz 85-789, Poland \\
e-mails: marek.lassak@utp.edu.pl, michal.musielak@gmail.com }


\maketitle

\blfootnote{\textit{Key words and phrases:} spherical convex body, spherical geometry, hemisphere, lune, width, constant width, thickness, diameter, reduced body, extreme point}\blfootnote{\textit{Mathematics Subject Classification:} Primary 52A55; Secondary 97G60} 

\begin{abstract} The aim of this paper is to present some properties of reduced spherical convex bodies on the two-dimensional sphere $S^2$. 
The intersection of two different non-opposite hemispheres 
is called a lune. 
By its thickness we mean the distance of the centers of the two semicircles bounding it. The thickness $\Delta (C)$ of $C$ is the minimum thickness of a lune containing $C$. We say that a spherical convex body $R$ is reduced if $\Delta (Z) < \Delta (R)$ for every spherical convex body $Z \subset R$ different from $R$. 
Our main theorem permits to describe the shape of reduced bodies of thickness below $\frac{\pi}{2}$. 
It implies a number of corollaries. 
In particular, we estimate the diameter of reduced spherical bodies in terms of their thickness. Reduced bodies of thickness at least $\frac{\pi}{2}$ have constant width. 
Spherical convex bodies of constant width below $\frac{\pi}{2}$ are strictly convex.
\end{abstract}

\section{Introduction}
\label{intro}

\noindent
Let $S^d$ be the unit sphere of 
the $(d+1)$-dimensional Euclidean space $E^{d+1}$. 
By a {\it great circle} of $S^d$ we mean the intersection of $S^d$ with any two-dimensional subspace of $E^{d+1}$. 
The common part of $S^d$ with any hyper-subspace of $E^{d+1}$ is called a {\it $(d-1)$-dimensional great sphere} of $S^d$.
In particular, for $S^2$ it is nothing else but a great circle.
By a pair of {\it antipodes} of $S^d$ we mean any pair of points which are  obtained as the intersection of $S^d$ with a one-dimensional subspace of $E^{d+1}$.
Observe that if two different points are not antipodes, there is exactly one great circle containing them.
If different points $a, b \in S^d$ are not antipodes, by the {\it spherical arc}, or shortly {\it arc}, $ab$ connecting them we understand the shorter part of the great circle containing $a$ and $b$. 
By the {\it spherical distance} $|ab|$, or shortly {\it distance}, of 
these points we mean the length of the arc connecting them. 
A subset of $S^d$ is called {\it convex} if it does not contain any pair of antipodes of $S^d$ and if together with every two points it contains the arc connecting them.
By {\it a spherical convex body} we mean a closed convex set with non-empty interior.
If in the boundary of a spherical convex body there is no arc, we say that the body is {\it strictly convex}.  
Convexity on $S^d$ is considered in very many papers and monographs.
For instance in \cite{DGK}, \cite{Fe1}, \cite{Fe2} \cite{FIN}, \cite{GHS}, \cite{Ha}, \cite{Le}, \cite{NS}, \cite{VB}.

The set of points of $S^d$ in the distance at most $\rho$, where $\rho \in (0, \frac{\pi}{2})$, from a point $c \in S^d$ is called a {\it spherical ball}, or shorter {\it a ball}, of {\it radius} $\rho$ and {\it center} $c$.  
Balls on $S^2$ are called {\it disks} and the boundary of a disk is called a {\it spherical circle}. 
Balls of radius $\frac{\pi}{2}$ are called {\it hemispheres}.
The hemisphere with center $m$ is denoted by $H(m)$.
In other words, a {\it hemisphere} is the intersection of $S^d$ with any closed half-space of $E^{d+1}$.
Two hemispheres whose centers are antipodes are called {\it opposite hemispheres}.

Let $p$ be a boundary point of a convex body $C \subset S^d$.
We say that a hemisphere $H$ {\it supports $C$ at} $p$ provided $C \subset H$ and $p$ belongs to the $(d-1)$-dimensional great sphere bounding $H$.
If at $p$ the body $C$ is supported by exactly one hemisphere, we say that $p$ {\it is a smooth point of} $C$. 
If all boundary points of $C$ are smooth, then $C$ is called to be {\it smooth}.

A {\it spherical $(d-1)$-dimensional ball of radius $\rho\in \left( 0, \frac{\pi}{2}\right]$}is the set of points of a $(d-1)$-dimensional great sphere of $S^d$ in the distance at most $\rho$ from a fixed point, called {\it the center} of this ball. 
The $(d-1)$-dimensional balls of radius $\frac{\pi}{2}$ are called {\it $(d-1)$-dimensional hemispheres}, and if $d=2$ we call them {\it semicircles}.

If hemispheres $G$ and $H$ are different and not opposite, then $L = G \cap H$ is called a {\it lune}. 
The two $(d-1)$-dimensional hemispheres bounding $L$ and contained in $G$ and $H$, respectively, are denoted by $G/H$ and $H/G$. 
The {\it thickness} $\Delta (L)$ of $L \subset S^d$ is defined as the distance of the centers of $G/H$ and $H/G$.  
For $d=2$ the $(d-1)$-dimensional hemispheres bounding $L$ are called the {\it semicircles bounding} $L$. 
For $d = 2$, by the {\it corners} of $L$ we understand the two points of the set $(G/H )\cap (H/G)$.

For every hemisphere $K$ supporting a convex body $C\subset S^d$ we are finding hemispheres $K^*$ supporting $C$ such that the lunes $K\cap K^*$ are of the minimum thickness (by compactness arguments at least one such a hemisphere $K^*$ exists).
The thickness of the lune $K \cap K^*$ is called {\it the width of $C$ determined by} $K$ and it is denoted by ${\rm width}_K (C)$ (see \cite{L2}).
The width of $C$ determined by $K$ changes continuously, as the position of $K$ changes (see Theorem 2 of \cite{L2}).
If for all hemispheres $K$ supporting $C$ the numbers ${\rm width}_K (C)$ are equal, we say that $C$ is {\it of constant width} (see \cite{L2}).
An application of spherical convex bodies of constant width is given in \cite{HN}.

By the {\it thickness} $\Delta (C)$ of a convex body $C \subset S^d$ we understand the minimum width of $C$ determined by $K$ over all supporting hemispheres $K$ of $C$ (see \cite{L2}).
The thickness of $C$ is nothing else but the minimum thickness of a lune containing $C$ (see  \cite{L2}); this approach is sometimes more convenient.

After \cite{L2} we say that a spherical convex body $R \subset S^d$ is {\it reduced} if $\Delta (Z) < \Delta (R)$ for every convex body $Z \subset R$ different from $R$. 
This definition is analogical to the definition of a reduced body in Euclidean space $E^d$ given in \cite{He}. 
See also \cite{Gr}, \cite{L1} and the survey article \cite{LM}.
For a larger context see Part 5.4 of \cite{HM}.
Simple examples of reduced spherical convex bodies on $S^d$ are spherical bodies of constant width and, in particular, the balls on $S^d$.
Also each of the $2^d$ parts of a spherical ball on $S^d$ dissected by $d$ great pairwise orthogonal $(d-1)$-dimensional spheres through the center of a spherical ball $B$ is a reduced spherical body.
It is called {\it $1\over 2^d$-part of a ball} (see \cite{L2}). 
In particular, for $d=2$, it is called a {\it quarter of a spherical disk}.
There is a wide class of reduced odd-gons on $S^2$ (see \cite{L3}).
In particular, spherical regular odd-gons of thickness at most $\frac{\pi}{2}$ are reduced.
Figure 2 of \cite{L3} shows a non-regular reduced pentagon.


\section{Supporting hemispheres of a spherical convex body}
\label{sec:2}

Let $C \subset S^2$ be a spherical convex body and let $X, Y, Z$ be different supporting hemispheres of $C$.
Denote by $x,y,z$ the centers of $X,Y,Z$, respectively. 
We introduce the following three-argument relationship $\prec \! XYZ$.  
In the case when there is no common point of the three great circles bounding $X, Y, Z$ and if $x, y, z$ are in this order on the boundary of the spherical triangle $xyz$ according to the positive orientation, then we write $\prec \! XYZ$.
In the case when the three great circles bounding $X, Y, Z$ have a common point $c$ and if $x, y, z$ are in this order on the boundary of the hemisphere centered at $c$, then we write $\prec \! XYZ$. 
If $\prec \!XYZ$, then we say that $X, Y, Z$ {\it support $C$ in this order}.
We need the notion $\prec \! XYZ$ as a tool in the proof of our main Theorem \ref{main}, and also for Lemma \ref{between} applied in its proof. 

By the way, $\prec \! XYZ$ if and only if $\prec \! YZX$ and if and only if $\prec \! ZXY$.

The symbol $\preceq \! XYZ$ means that $\prec \!XYZ$ or $X=Y$ or $Y=Z$ or $Z=X$.
If $\prec XYZ$ (respectively: $\preceq XYZ$), then we say that {\it $Y$ supports $C$ strictly between} (respectively: {\it between}) {\it $X$  and $Z$}.

Assume that $\prec \!XYZ$, where $X,Y,Z$ are supporting hemispheres of $C$.
Then if we go on the boundary of $C$ according to the positive orientation starting at any point of $C \cap \bd(X)$, we do not reach any point of $C \cap \bd (Z)$ before reaching all points of $C \cap \bd(Y)$.

Observe that for any boundary point $p$ of a spherical convex body $C \subset S^2$ there are hemispheres $H_r$ and $H_l$ supporting $C$ at $p$ such that every hemisphere $H$ supporting $C$ at $p$ satisfies $H_r \preceq H \preceq H_l$. 
We call $H_r$ and $H_l$ the {\it right} and, respectively, {\it the left supporting hemispheres of $C$ at} $p$. 
Clearly, if $p$ is  a smooth point of $C$, then the hemispheres $H_r$ and $H_l$ coincide.
The left and right supporting hemispheres of $C$ are called {\it extreme supporting hemispheres of} $C$.  


\begin{lem}\label{leftright} 
Let $C\subset S^2$ be a convex body and let $L = G \cap H$ be a lune of thickness $\Delta (C)$ containing $C$, where $G$ and $H$ are hemispheres. 
Denote by $g$ the center of $G/H$ and by $h$ the center of $H/G$.
We claim that at least one of the hemispheres $G$ and $H$ is a  right supporting hemisphere of $R$ (at $g$ if this is $G$ and at $h$ if this is $H$), and at least one of these hemispheres is a left supporting hemisphere of $C$ (at $g$ if this is $G$ and at $h$ if this is $H$).
\end{lem}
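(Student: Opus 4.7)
The plan is to prove the lemma by contradiction, treating the right-supporting assertion directly; the left-supporting assertion then follows by applying the same argument after reversing the orientation of $S^2$. Assume that $G$ is not a right supporting hemisphere of $C$ at $g$ and that $H$ is not a right supporting hemisphere of $C$ at $h$. The aim is to construct a lune $L' = G' \cap H'$ containing $C$ whose thickness is strictly less than $|gh| = \Delta(C)$, contradicting the minimality of $L$.

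The failure of the right-supporting condition at $g$ means that either $g \notin C$ (so a full neighborhood of $g$ on $\bd G$ is disjoint from $C$), or $g \in C$ but some hemisphere strictly clockwise of $G$ -- in the sense of the ordering $\prec$ introduced in Section 2 -- still supports $C$ at $g$. In both cases I obtain a one-parameter family $\{G_t\}$ rotating $\bd G$ slightly in the clockwise sense while preserving $C \subset G_t$; the pivot is $g$ itself in the second case, and a suitable contact point of $C$ with $\bd G$ in the first. An analogous family $\{H_t\}$ arises from the failure at $h$. Writing $g_t$ and $h_t$ for the centers of $G_t/H_t$ and $H_t/G_t$ respectively, the task is to choose the coupled deformation $(G_t, H_t)$ so that $|g_t h_t| < |gh|$ for small $t > 0$, while $C \subset G_t \cap H_t$ throughout.

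The main obstacle is the first-order computation. If $\bd G$ alone is rotated about $g$, the new center $g_t$ slides along $\bd G$ in a direction roughly perpendicular to the arc $gh$, so the induced change in $|g_t h|$ is of second order only, and for thin lunes (those of thickness below $\frac{\pi}{2}$, the regime of principal interest in this paper) the sign of that second-order change actually works against us. Consequently the two rotations of $\bd G$ and $\bd H$ must be combined in the right ratio so that the tangential motions of $g_t$ and $h_t$ conspire to give a strict first-order decrease in $|g_t h_t|$. The two independent failure hypotheses furnish exactly the two degrees of freedom required for this coupling. Once one verifies the feasibility of the coupled perturbation and carries out the first-order calculation, the contradiction is immediate, and the left-supporting statement then follows by orientation reversal.
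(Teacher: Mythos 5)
Your overall strategy---assume neither $G$ nor $H$ is a right supporting hemisphere at its center, rotate each hemisphere about that center in the admissible sense, and exhibit a thinner lune containing $C$---is the right one and is essentially the argument the paper points to (Proposition 2.1 of \cite{L3}). But you leave the decisive step undone and, worse, predict the wrong outcome for it. Write $o_G$, $o_H$ for the centers of the two hemispheres; then $\Delta(G\cap H)=\pi-|o_Go_H|$ and the four points $g,h,o_G,o_H$ lie on one great circle. Rotating $G$ about $g$ moves $o_G$ along the circle of radius $\frac{\pi}{2}$ centered at $g$, i.e.\ \emph{perpendicularly} to that great circle, and likewise for $o_H$ under rotation of $H$ about $h$. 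Hence the first derivative of $|o_{G_t}o_{H_s}|$, equivalently of $|g_th_s|$, vanishes identically at $t=s=0$: there is no ``strict first-order decrease'' to be obtained, in any ratio of the two rotation speeds. The usable effect is second order. With equal angles $t=s$ and the two senses matched as the two failure hypotheses dictate, one computes $\cos|o_{G_t}o_{H_t}|=-\cos\Delta\cos^2t-\sin^2t=-\cos\Delta-(1-\cos\Delta)\sin^2t<-\cos\Delta$, so the thickness drops by a positive multiple of $t^2$; for unequal angles one needs $2ts/(t^2+s^2)>\cos\Delta$, so the choice $t=s$ is not cosmetic. Since this sign computation, together with checking that the admissible rotation senses at $g$ and at $h$ are exactly the matching ones, is the entire content of the lemma, omitting it and asserting a first-order mechanism that does not exist is a genuine gap. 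Your own correct observation that a single rotation changes the width only at second order (unfavourably when $\Delta<\frac{\pi}{2}$) should have signalled that a sum of two such motions cannot produce a first-order term.

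A secondary point: the branch $g\notin C$ does not occur. Claim 2 of \cite{L2}, used freely throughout the paper, guarantees that the centers of the semicircles bounding a lune of minimal thickness belong to $C$ (the lemma's conclusion only makes sense because of this). You should invoke it rather than pivot about ``a suitable contact point'' of $C$ with $\bd(G)$: moving the pivot off the great circle through $g$ and $h$ destroys the orthogonality used above, introduces a genuine first-order term of unknown sign, and would require a separate and quite different estimate that you do not supply.
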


\medskip
The proof is analogous to the proof of Proposition 2.1 from \cite{L3}.

Having in mind the forthcoming Lemma \ref{between} and Theorem \ref{main},  following Part I of Theorem 1 of \cite{L2} recall that for every supporting hemisphere $N$ of a spherical convex body $C \subset S^2$ of thickness below $\frac{\pi}{2}$ the hemisphere $N^*$ is unique.


\begin{lem} \label{between}
Assume that  hemispheres $N_1$, $N_2$ and $N_3$ support a spherically convex body $C$ of thickness below $\frac{\pi}{2}$ and let ${\rm width}_{N_i} (C) = \Delta (C)$ for $i = 1, 2, 3$.
We have $\prec \!N_1N_2N_3$ if and only if $\prec \!N_1^*N_2^*N_3^*$.
\end{lem}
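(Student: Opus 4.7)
My plan is to prove both directions of the biconditional at once by showing that the assignment $\phi: N \mapsto N^*$ preserves cyclic order when restricted to the set $\mathcal{S} = \{N : \text{width}_N(C) = \Delta(C)\}$ (which contains $N_1, N_2, N_3$ by hypothesis). The standing assumption $\Delta(C) < \pi/2$, via Part I of Theorem 1 of \cite{L2}, makes $N^*$ unique for every supporting hemisphere $N$, so $\phi$ is a well-defined continuous self-map of the space $\mathcal{H}$ of supporting hemispheres of $C$, which is topologically a circle. (Continuity follows from Theorem 2 of \cite{L2} together with uniqueness and a standard compactness argument: if $N_n \to N$, any subsequential limit of $N_n^*$ realizes the minimum lune-thickness with $N$, and uniqueness forces it to equal $N^*$.)

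I would next record two elementary observations. First, $\phi$ has no fixed points: $N = N^*$ is impossible, for then $N \cap N^*$ would be the hemisphere $N$ itself rather than a lune, contradicting the definition of $N^*$. Second, $\phi|_{\mathcal{S}}$ is an involution: for $N \in \mathcal{S}$ the lune $N \cap N^*$ witnesses $\text{width}_{N^*}(C) \le \Delta(C)$, while the definition of thickness gives $\text{width}_{N^*}(C) \ge \Delta(C)$; equality follows, so $N^* \in \mathcal{S}$, and uniqueness forces $(N^*)^* = N$. Consequently $\phi|_{\mathcal{S}}: \mathcal{S} \to \mathcal{S}$ is a continuous involutive bijection, hence a homeomorphism of the compact set $\mathcal{S}$ onto itself.

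The heart of the argument is to show that $\phi|_{\mathcal{S}}$ preserves the cyclic order inherited from $\mathcal{H}$. Here I would invoke Lemma \ref{leftright} applied to each lune $L_i = N_i \cap N_i^*$ of thickness $\Delta(C)$: one of $\{N_i, N_i^*\}$ is the right supporting hemisphere of $C$ at the corresponding center of the semicircle bounding $L_i$, and one is the left. Tracking the resulting contact points on $\partial C$: as $N$ rotates positively through $\mathcal{S}$, its contact point $g_N$ moves positively along $\partial C$, and simultaneously $N^*$'s contact point $h_N$, lying on the ``opposite side'' of $\partial C$, traces $\partial C$ in the same positive direction. Hence $\prec \! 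N_1 N_2 N_3$ yields that the contact points of $N_1^*, N_2^*, N_3^*$ are in positive cyclic order on $\partial C$, which then gives $\prec \! N_1^* N_2^* N_3^*$. The converse implication follows by applying this to $(N_1^*, N_2^*, N_3^*) \in \mathcal{S}^3$ together with the identity $(N_i^*)^* = N_i$ from the previous step.

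The main obstacle I anticipate is justifying the monotonicity claim in the previous paragraph — that as $N$ rotates positively in $\mathcal{S}$, the partner $N^*$ also rotates positively. A purely abstract topological argument is insufficient: although $\phi$ continuous and fixed-point-free on $\mathcal{H} \cong S^1$ easily forces $\deg(\phi) = +1$ (since $\tilde\phi(x) - x$ would otherwise be unbounded and attain an integer), one can construct continuous degree-$1$ fixed-point-free self-maps of $S^1$ that reverse cyclic order on a proper finite subset. The specific spherical geometry of minimum-thickness lunes, encoded in Lemma \ref{leftright} and the bijective correspondence between rotations of $N$ and motions of $g_N$ on $\partial C$, is what pins down the correct orientation.
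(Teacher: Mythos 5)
Your reduction of the problem is sound as far as it goes: uniqueness of $N^*$ for $\Delta(C)<\frac{\pi}{2}$ (Part I of Theorem 1 of \cite{L2}), the fixed-point-freeness of $N\mapsto N^*$, and the involution property $(N^*)^*=N$ on the set $\mathcal{S}$ of minimum-width supporting hemispheres are all correctly justified. But the proof has a genuine gap exactly where you yourself flag ``the main obstacle'': the claim that as $N$ rotates positively through $\mathcal{S}$, the partner $N^*$ also rotates positively. That claim \emph{is} the lemma; restating it as ``the contact point $h_N$, lying on the opposite side of $\partial C$, traces $\partial C$ in the same positive direction'' and gesturing at Lemma \ref{leftright} does not prove it. Lemma \ref{leftright} only tells you that one of the two hemispheres bounding a minimal lune is a right and one a left supporting hemisphere at its contact center; it gives no comparison between the cyclic positions of $N_1^*,N_2^*,N_3^*$ for three different lunes, and it does not even rule out that two of the contact points coincide or that the relevant arcs fail to separate the boundary in the way your ``opposite side'' picture assumes. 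As you correctly observe, no purely topological property of a fixed-point-free involution of a subset of $S^1$ forces order preservation, so a concrete geometric input is indispensable — and it is missing.

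For comparison, the paper supplies that input by an entirely different device: it passes to the centers $o_i,o_i^*$ of the hemispheres $N_i,N_i^*$ and the centers $a_i,b_i$ of the bounding semicircles, builds the auxiliary convex body $V=\bigcap_{p\in\{a_1,a_2,a_3,b_1,b_2,b_3\}}H(p)$, shows that all six points $o_i,o_i^*$ lie on $\bd(V)$, and then proves — using that each arc $o_io_i^*$ has length $\pi-\Delta(C)>\frac{\pi}{2}$ and is the unique arc of that length inside the lune $H(a_i)\cap H(b_i)$ — that any two of the arcs $o_1o_1^*,o_2o_2^*,o_3o_3^*$ either coincide or cross in the interior of $V$. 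Pairwise-crossing chords of a convex body force the endpoints $o_1,o_2,o_3$ and $o_1^*,o_2^*,o_3^*$ to interlace with matching cyclic orders, which is precisely the order-preservation you need. To complete your proof you would have to either reproduce an argument of this kind or find another quantitative reason why the lunes' ``long diagonals'' pairwise separate the boundary; as written, the crucial step is asserted rather than proved.
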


\begin{proof}
Since ${\rm width}_{N_i} (C) = \Delta (C)$, the thickness of the lune $N_i \cap N_i^*$ is $\Delta (C)$ for $i=1,2,3$.
Denote by $a_i$ the center of the semicircle $N_i/N_i^*$ and by $b_i$ the center of the semicircle $N_i^*/N_i$ for $i=1, 2, 3$. 
By Claim 2 of \cite{L2} these six centers belong to the boundary of $C$.

Denote by $o_i$ be the center of the hemisphere $N_i$ and by $o_i^*$ the center of the hemisphere $N_i^*$ for $i=1,2,3$.
Since $\Delta (C) < \frac{\pi}{2}$, we see that $o_i$ and $o_i^*$ do not belong to $C$ for $i =1,2,3$.
Thus $a_i$ and $b_i$ belong to the arc $o_io_i^*$ for $i=1,2,3$.

Put $A = \{a_1,a_2,a_3,b_1,b_2,b_3\}$ and $O = \{o_1,o_2,o_3,o_1^*, o_2^*, o_3^*\}$.
Observe that every point of $A$ belongs to any hemisphere with the center in $O$. 
Thus the distance between any point of $O$ and any point of $A$ is at most $\frac{\pi}{2}$.
Consequently, every hemisphere with the center in $A$ contains $O$.

Denote by $V$ the intersection of the six hemispheres with the centers in $A$. 
From the inclusion $C \subset V$ we conclude that $V$ has non-empty interior.
Moreover, the points of $A$ are not contained in one great circle, which implies that $V$ does not contain any pair of antipodes. 
Consequently, $V$ is a spherical convex body.
Observe that all points of $O$ belong to the boundary of $V$.

Note that the length of $o_io_i^*$ is equal $\pi-\Delta (R)$ and is greater than $\frac{\pi}{2}$  for $i=1,2,3$.
Consider the lune $L_1 = H(a_1)\cap H(b_1)$. 
By $V\subset L_1$ we conclude that all points from $O$ belong to $L_1$.
Denote by $c$ and $c'$ corners of $L_1$ and consider two spherical triangles $T=o_1co_1^*$ and $T'=o_1c'o_1^*$.
Clearly, $T\cup T' = L_1$.
Note that $\diam (T)= \diam (T') =\pi - \Delta (R)$ and the only arc of length $\pi - \Delta (R)$ contained in $T$ or contained in $T'$ is $o_1o_1^*$.
Therefore either $o_1=o_2^*$ and $o_1^*=o_2$, or exactly one of points $o_2$ and $o_2^*$ belongs to $T$ and exactly one belongs to $T'$. 
Thus either $o_1o_1^*=o_2^*o_2$ or $o_1o_1^*$ intersects $o_2o_2^*$ and the point of the intersection belongs to the interior of $V$.
In the same way we show that either two arcs from amongst $o_1o_1^*$, $o_2o_2^*$, $o_3o_3^*$ coincide, or each pair of them intersects in the interior of $V$. 
In both cases we conclude that $o_1,o_2,o_3$ are on the boundary of $V$ in this order according to the positive orientation if and only if $o_1^*,o_2^*,o_3^*$ lay on the boundary $V$ in this order according to the positive orientation.
Thus by the convexity of $V$ we obtain that $\prec \!N_1N_2N_3$ if and only if $\prec \!N_1^*N_2^*N_3^*$.
\end{proof}

\label{sec:3}

\section{Reduced bodies of thickness below $\frac{\pi}{2}$}

\vskip0.2cm
Our main Theorem \ref{main} gives a description of the shape of reduced bodies of thickness below $\frac{\pi}{2}$ on $S^2$.
It is analogous to its version in $E^2$ presented in Theorem 3 of \cite{L1}. 
We are not able to present analogical proof as this in  \cite{L1}.
The first trouble is in the lack of the notion of parallelism on the sphere, which was used many times in the proof for $E^2$.
The second trouble is that we are not able to give a spherical analog of the relationship $l_1 \prec l_2$ for a pair of different directions $l_1, l_2$ considered in \cite{L1} for $E^2$.
It appears that a similar notion $G_1 \prec G_2$ for pairs of great circles, or a notion $H_1 \prec H_2$ for pairs of hemispheres (even supporting a spherical convex body $C$) does not make sense; the reason is that every two different great circles intersect at two points (so not at one point as two different non-parallel straight lines in the plane), and thus we are not able to state which of these great circles is ``earlier".
What helps, is the notion of the three argument relationship $\prec \! XYZ$ presented in Section 2.
It permits to establish the order of any three different hemispheres $X, Y, Z$ supporting a spherical convex body $C$, which appears to be sufficient for our needs in the proof of Theorem \ref{main} and also the applied there Lemma \ref{between}.


\begin{thm} \label{main}
Let $R \subset S^2$ be a reduced spherical body with $\Delta (R) < {\pi \over 2}$.
Let $M_1$ and $M_2$ be supporting hemispheres of $R$ such that ${\rm width}_{M_1} (R) = \Delta (R) = {\rm width}_{M_2} (R)$ and ${\rm width}_{M} (R) > \Delta (R)$ for every hemisphere $M$ satisfying $\prec \!M_1MM_2$.
Consider the lunes $L_1 = M_1 \cap M_1^*$ and $L_2 = M_2 \cap M_2^*$.  
We claim that the arcs $a_1a_2$ and $b_1b_2$ are in the boundary of $R$, where $a_i$ is the center of $M_i/M_i^*$ and $b_i$ is the center of $M_i^*/M_i$ for $i=1,2$.
Moreover, $|a_1a_2| = |b_1b_2|$.
\end{thm}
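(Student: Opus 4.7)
The plan is to prove the three conclusions---$a_1a_2\subset\partial R$, $b_1b_2\subset\partial R$, and $|a_1a_2|=|b_1b_2|$---separately. Write $\Delta=\Delta(R)$.

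For the inclusion $a_1a_2\subset\partial R$, I argue by contradiction. If the arc is not in $\partial R$, then since $a_1,a_2\in\partial R$ and by convexity $a_1a_2\subset R$, some relative interior point of the arc lies in $\inter R$; equivalently, $R$ has a ``bulge'' strictly on one side of the great circle $\gamma_a$ through $a_1,a_2$. The goal is to exhibit a proper convex subbody $R^\ast\subsetneq R$ with $\Delta(R^\ast)=\Delta$, contradicting reducedness. The naive choice $R\cap H_a$, where $H_a$ is the hemisphere bounded by $\gamma_a$ on the side of $b_1,b_2$, can fail because ${\rm width}_{H_a}(R\cap H_a)$ may drop below $\Delta$. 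Instead I would cut $R$ by a carefully chosen family of hemispheres so that every new supporting hemisphere of $R^\ast$ induces an optimal lune of thickness $\geq\Delta$, the cut being designed around the constraint that the bottom pole of each new lune is retained in $R^\ast$.

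The verification that $\Delta(R^\ast)=\Delta$ reduces to classifying the supporting hemispheres $K$ of $R^\ast$. If $K$ also supports $R$, then the center $b(K)$ of $K^\ast/K$ lies on $\partial R$ at distance ${\rm width}_K(R)\geq\Delta$ from $\partial K$ (by Claim 2 of \cite{L2} when ${\rm width}_K(R)=\Delta$, and by the optimization defining $K^\ast$ in general). Because the cut removes only the bulge on the ``top'' side of $\gamma_a$ while $b(K)$ lies on the opposite side, $b(K)\in R^\ast$; consequently any lune $K\cap K'$ covering $R^\ast$ contains $b(K)$ and therefore has thickness $\geq\Delta$. This handles $K\in\{M_1,M_2\}$ and $K$ outside the range $\prec\!M_1KM_2$ (both with bound $\geq\Delta$), and, via Lemma \ref{between} to place $K^\ast$ in the dual range, also $K$ strictly between $M_1,M_2$ (strict bound $>\Delta$). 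For the genuinely new supporting hemispheres of $R^\ast$ arising from the cut, continuity of the width function (Theorem 2 of \cite{L2}) combined with the design of the cut keeps widths $\geq\Delta$. The inclusion $b_1b_2\subset\partial R$ then follows by symmetry: the pair $(M_1^\ast,M_2^\ast)$ satisfies the same hypotheses because ${\rm width}_{M_i^\ast}(R)=\Delta$ (the lune $L_i$ is symmetric in its two hemispheres), and by Lemma \ref{between} the strict inequality ${\rm width}_N(R)>\Delta$ persists for $N$ strictly between $M_1^\ast,M_2^\ast$; for this dual pair the ``top'' centers are precisely $b_1,b_2$.

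For the equal-length assertion, let $\gamma_b$ be the great circle through $b_1,b_2$ and $H_b\supset R$ the hemisphere it bounds. The lune $\Lambda=H_a\cap H_b$ contains $R$, and its two distinguished midpoints---centers of $H_a/H_b$ and $H_b/H_a$---lie on $\partial R$. Using the natural isometry between the two bounding semicircles of $\Lambda$ together with the fact that $a_i$ is joined to $b_i$ by the perpendicular transversal of $L_i$ of length exactly $\Delta$ (perpendicular to $\partial M_i$ at $a_i$ and to $\partial M_i^\ast$ at $b_i$), one shows that $\{a_1,a_2\}$ and $\{b_1,b_2\}$ occupy matching positions on the two semicircles of $\Lambda$, forcing $|a_1a_2|=|b_1b_2|$. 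I expect the main obstacles to be (i) designing the cut in the first part so that no new thin lune is introduced (a straight great-circle cut at $\gamma_a$ itself can reduce thickness, so the cut must be assembled from several hemispheres guided by the positions of the preserved bottom poles), and (ii) the matching step in the equal-length argument, which requires careful bookkeeping of the corners of $L_1,L_2$ and $\Lambda$ on $S^2$ in the absence of parallelism, relying on the three-argument order relation $\prec$ developed in Section 2.
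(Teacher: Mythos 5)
Your overall architecture (contradiction for the two arcs, then a lune/right-triangle computation for the equal lengths) points in the right direction, but two of the three steps rest on constructions that are announced rather than carried out, and these are precisely where the difficulty lies. For the inclusion $a_1a_2\subset\bd(R)$ you propose to cut off the bulge by ``a carefully chosen family of hemispheres'' so that the truncated body $R^\ast$ keeps thickness $\Delta(R)$; you correctly note that the naive cut along the great circle through $a_1,a_2$ can lower the thickness, but you never say what the correct cut is, and the verification that no new thin lune appears is deferred to ``the design of the cut.'' That design is the entire content of the step. The paper does not construct any smaller body at all: it invokes Theorem 4 of \cite{L2}, which already converts reducedness into the statement that every \emph{extreme point} $e$ of $R$ is the center of a semicircle bounding a lune of thickness $\Delta(R)$ containing $R$. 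If the boundary piece from $a_1$ to $a_2$ were not an arc, it would contain an extreme point $e\neq a_1,a_2$, whose associated hemisphere $M_L$ satisfies $\prec\!M_1M_LM_2$ and ${\rm width}_{M_L}(R)=\Delta(R)$, contradicting the hypothesis directly. Your treatment of $b_1b_2$ via the dual pair $(M_1^\ast,M_2^\ast)$ and Lemma \ref{between} is sound in outline and matches the paper's second step, but it inherits the unproven first step.

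The equal-length argument also has a gap. You assert that a ``natural isometry between the two bounding semicircles of $\Lambda=H_a\cap H_b$'' places $\{a_1,a_2\}$ and $\{b_1,b_2\}$ in matching positions, but the transversals $a_ib_i$ are orthogonal to the boundaries of $M_i$ and $M_i^\ast$, not to those of $H_a$ and $H_b$, so there is no reason the corner-fixing isometry of $\Lambda$ should match $a_i$ with $b_i$; as stated the step does not follow. The paper's route is concrete: Lemma \ref{leftright} forces $M_1=H_a$ or $M_1^\ast=H_b$ and likewise for $M_2$, whence (since $M_1\neq M_2$) one of $a_1,a_2$ equals the corner $x$ of the quadrangle $L_1\cap L_2$ where $\bd(M_1)$ meets $\bd(M_2)$, and the corresponding $b_j$ equals the opposite corner $z$. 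The two right spherical triangles cut off by the diagonal $xz$ then have a common hypotenuse and legs of equal length $\Delta(R)$, hence are congruent, giving $|a_1a_2|=|b_1b_2|$. To repair your proof you would need either to supply the explicit truncation and its thickness estimate (essentially reproving Theorem 4 of \cite{L2}), or to quote that theorem as the paper does, and to replace the isometry heuristic by the corner identification via Lemma \ref{leftright}.
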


\begin{proof} 
By Claim 2 of \cite{L2} points $a_1, b_1, a_2, b_2$ belong to $R$ and thus to the boundary of $R$.
If we assume that the piece of the boundary of $R$ from $a_1$ to $a_2$ (according to the positive orientation) is not an arc, then there is an extreme point $e$ of $R$ on this piece of the boundary different from $a_1$ and $a_2$. 
Then by Theorem 4 from \cite{L2} there exists a lune $L \supset R$ of thickness $\Delta(R)$ with $e$ as the center of one of the two semicircles bounding $L$. 
Denote by $M_L$ the hemisphere for which $e \in \bd (M_L)$, where $L = M_L \cap M_L^*$.
Observe that $L$ differs from $L_1$ and $L_2$. 
The reason is that if we assume $L = L_i$ for $i \in \{1,2\}$, then both $a_i$ and $e$ are the centers of the same semicircle bounding the lune $L = L_i$, and thus $a_i = e$, which is impossible.
Since $L$ differs from $L_1$ and $L_2$, and since $e$ belongs to the piece of the boundary from $a_1$ to $a_2$ being different from these points, we have $\prec \!M_1M_LM_2$.  
This, the description of $M_L$ and $\Delta(L) = \Delta(R)$ contradict the assumption of our theorem that for every hemisphere $M$ satisfying $\prec \!M_1MM_2$ we have ${\rm width}_{M} (R) > \Delta (R)$. 
Hence the considered piece of the boundary must be the arc $a_1a_2$. 

Now we intend to show that the piece of the boundary of $R$ from $b_1$ to $b_2$ (according to the positive orientation) is the arc $b_1b_2$. 
Assume the opposite. 
Then there exists an extreme point $e'$ of $R$ on this piece different from $b_1$ and $b_2$. 
Consequently, by Theorem 4 from \cite{L2} there exists a lune $L' \supset  R$ of thickness $\Delta(R)$ with $e'$ as the center of one of the two hemispheres bounding $L'$. 
Moreover, $L'$ has the form $M_{L'} \cap (M_{L'})^*$ where $\prec \!M_1^*(M_{L'})^*M_2^*$.

By Lemma \ref{between} we have $\prec \! M_1M_{L'}M_2$. 
This and ${\rm width}_{M_{L'}} (R) = \Delta (R)$ contradict the assumption of our theorem that ${\rm width}_{M} (R) > \Delta (R)$ for every hemisphere $M$  fulfilling $\prec \!M_1MM_2$.
Consequently the considered piece of the boundary of $R$ is the arc $b_1b_2$.

Finally, show that $|a_1a_2| = |b_1b_2|$.

\begin{center}

\includegraphics[width=3.8in]{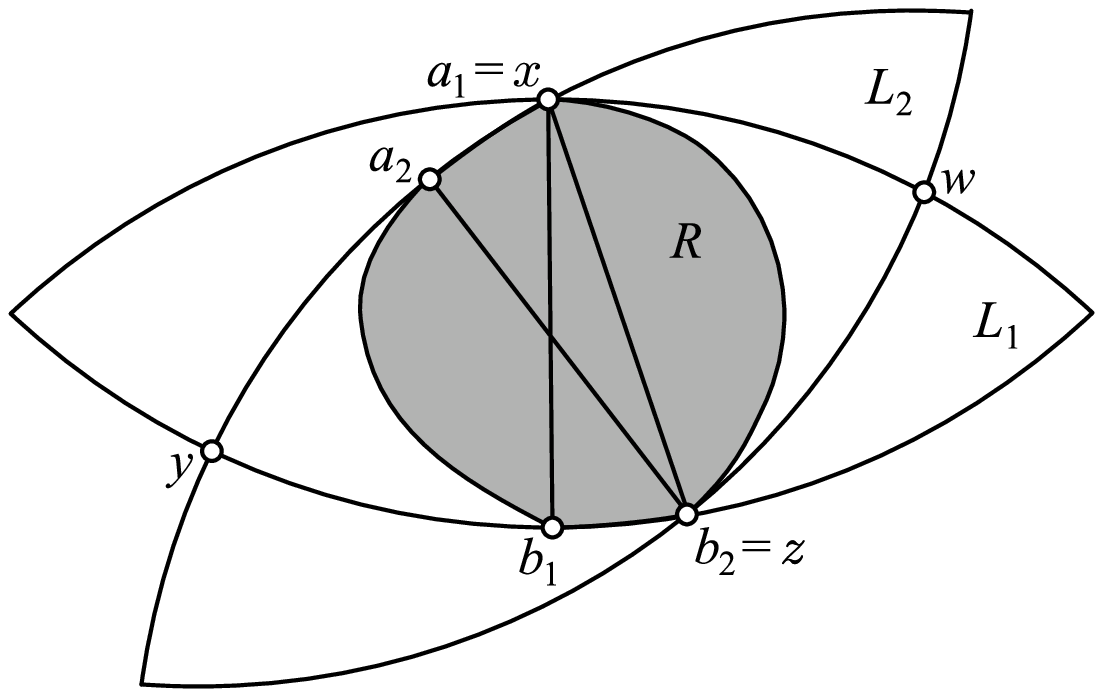} \\ 

\medskip
{FIGURE 1. Illustration to the proof of Theorem 1.}

\end{center}
\medskip






Let $x, y, z, w$ be the vertices of the spherical quadrangle $L_1 \cap L_2$ such that $a_1 \in wx, a_2 \in xy, b_1 \in yz$ and $b_2 \in zw$.
Denote by $H_a$ the hemisphere supporting $R$ at every point of the arc $a_1a_2$ and by $H_b$ the hemisphere supporting $R$ at every point of the arc $b_1b_2$.
Observe that $H_a$ is the left supporting hemisphere of $R$ at $a_1$ and the right supporting hemisphere of $R$ at $a_2$. 
Analogously, $H_b$ is the left supporting hemisphere of $R$ at $b_1$ and the right supporting hemisphere of $R$ at $b_2$. 
From Lemma \ref{leftright} we know that at least one of the hemispheres $M_1$ and $M_1^*$ is a left supporting hemisphere of $R$ at a boundary point of $R$.
Therefore $M_1=H_a$ or $M_1^*=H_b$, and for the same reason we have $M_2=H_a$ or $M_2^*=H_b$.

We cannot have simultaneously $M_1=H_a$ and $M_2=H_a$ because $M_1$ is different from $M_2$. For a similar reason it cannot be simultaneously $M_1^*=H_b$ and $M_2^*=H_b$. Hence either $M_1=H_a$ and $M_2^*=H_b$,  or  $M_1^*=H_b$ and $M_2=H_a$. As a consequence either $a_1 = x$ and $b_2 = z$,  or $a_2 = x$  and $b_1 = z$.

Let for instance $a_1 = x$ and $b_2 = z$ (see Figure 1).  
By the assumptions of our theorem we have $|a_1b_1| = \Delta (R) = |a_2b_2|$. 
Moreover, the triangles $a_1a_2b_2$ and $b_2b_1a_1$ have the common side  $a_1b_2$ and equal angles $\angle a_1a_2b_2 = \frac{\pi}{2} = \angle b_2b_1a_1$ (see Figure 1). 
Hence they are equal.
Thus $|a_1a_2| = |b_1b_2|$.
\end{proof}

Theorem \ref{main} allows us to describe the structure of the boundary of every reduced body on $S^2$.
Namely, we conclude that the boundary consists of in a sense ``opposite" arcs of equal length (some pairs of successive two such arcs may form longer arcs) and from some ``opposite" pieces of spherical ``curves of constant width".
We obtain these ``opposite" pieces of spherical curves of constant width always when ${\rm width}_M (R) = \Delta (R)$ for all $M$ fulfilling $\preceq \!M_1MM_2$, where $M_1$ and $M_2$ are two fixed supporting semicircles of $R$.

\medskip


\begin{pro}\label{a1=a2} 
Let $R \subset S^2$ be a reduced body with $\Delta (R) < \frac {\pi}{2}$.
Assume that ${\rm width}_{M_1} (R) = \Delta (R) = {\rm width}_{M_2} (R)$, where $M_1$ and  $M_2$ are two fixed supporting hemispheres of $R$. 
Denote by $a_i$ the center of the semicircle $M_i/M_i^*$, and by $b_i$ the center of the semicircle $M_i^*/M_i$ for $i=1,2$.
 Assume that $a_1=a_2$  and that $\preceq \!M_1MM_2$ for every $M$ supporting $R$ at this point.
Then the shorter piece of the spherical circle with the center $a_1 = a_2$ and radius $\Delta (R)$ connecting $b_1$ and $b_2$ is in the boundary of $R$.
Moreover, ${\rm width}_M (R) = \Delta (R)$ for all these $M$. 

\end{pro}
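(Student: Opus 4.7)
Write $a := a_1 = a_2$. Since $a_i \in R \cap \bd(M_i)$ by Claim 2 of \cite{L2}, both $M_1$ and $M_2$ support $R$ at $a$; together with the hypothesis that every supporting hemisphere of $R$ at $a$ satisfies $\preceq \!M_1 M M_2$, this makes $M_1$ and $M_2$ the two extreme supporting hemispheres of $R$ at $a$. Moreover, because the supporter map is monotone in the $\prec$-ordering along $\bd(R)$, every supporting hemisphere $M$ of $R$ with $\prec \!M_1 M M_2$ must in fact support $R$ at $a$. The plan is to reduce the whole proposition to the \emph{width identity}: ${\rm width}_M(R) = \Delta(R)$ for every $M$ supporting $R$ at $a$ (which is also the final sentence of the proposition).

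Granting this identity, the boundary-arc claim follows. For each such $M$ let $M^*$ denote the unique minimum-thickness partner and let $a_M, b_M$ be the centers of $M/M^*$ and $M^*/M$. For $M \in \{M_1, M_2\}$ we have $a_M = a$ by hypothesis. For $M$ with $\prec \!M_1 M M_2$ strictly, $M$ supports $R$ only at $a$: otherwise an arc of $\bd(R)$ would lie along $\bd(M)$ with $M$ the unique supporter at interior points of that arc, but the supporters at boundary points of $R$ adjacent to $a$ are precisely the extremes $M_1$ and $M_2$. Hence $\bd(M)\cap R=\{a\}$, and Claim 2 of \cite{L2} forces $a_M = a$. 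Viewing $S^2\subset\mathbb{R}^3$ and writing $o_M$ for the center of $M$ (so $a\cdot o_M=0$), we obtain
\[
b_M = \cos\Delta(R)\cdot a + \sin\Delta(R)\cdot o_M,
\]
because $b_M$ is the point at distance $\Delta(R)$ from $a$ on the great circle through $a$ perpendicular to $\bd(M)$, in the direction of $o_M$. As $M$ sweeps the supporting arc continuously from $M_1$ to $M_2$, $o_M$ moves monotonically along the polar great circle of $a$, and $b_M$ traces a continuous arc from $b_1$ to $b_2$ on the spherical circle of radius $\Delta(R)$ centered at $a$. Its angular extent equals the opening angle of $R$ at $a$, which is strictly less than $\pi$, so it is the shorter arc. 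Each $b_M$ lies in $R$ by Claim 2 of \cite{L2} and on $\bd(M^*)\subseteq\bd(R)$; hence this shorter arc is contained in $\bd(R)$.

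For the width identity I would argue by contradiction via Theorem \ref{main}. Assume ${\rm width}_{M_0}(R)>\Delta(R)$ for some $M_0$ with $\prec \!M_1 M_0 M_2$. By the continuity of $M\mapsto{\rm width}_M(R)$, the set of supporting hemispheres $N$ with $\preceq \!M_1 N M_2$ and ${\rm width}_N(R)>\Delta(R)$ is open in the supporting arc at $a$; its connected component containing $M_0$ is an open sub-arc whose endpoints we denote $N'$ and $N''$. By maximality ${\rm width}_{N'}(R)={\rm width}_{N''}(R)=\Delta(R)$, and every supporting hemisphere $N$ with $\prec \!N' N N''$ satisfies $\prec \!M_1 N M_2$, hence supports $R$ at $a$, and thus ${\rm width}_N(R)>\Delta(R)$. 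The hypotheses of Theorem \ref{main} therefore hold with $N'$ and $N''$ in the roles of $M_1$ and $M_2$, and the theorem yields arcs $a_{N'}a_{N''}$ and $b_{N'}b_{N''}$ in $\bd(R)$ of equal length. By the same analysis as in the previous paragraph $a_{N'}=a=a_{N''}$, so $|a_{N'}a_{N''}|=0$ and therefore $b_{N'}=b_{N''}$; the formula $b_N=\cos\Delta(R)\cdot a+\sin\Delta(R)\cdot o_N$ together with $\sin\Delta(R)>0$ then forces $o_{N'}=o_{N''}$, i.e.\ $N'=N''$, contradicting the nondegeneracy of $(N',N'')$.

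The principal obstacle is leveraging Theorem \ref{main} in its degenerate configuration where $a_1=a_2$: recognizing that any application of it over a bracket where the width strictly exceeds $\Delta(R)$ must produce degenerate $a$- and $b$-arcs --- and that, via the rigid formula for $b_N$ in terms of $o_N$, this forces the bracket endpoints to coincide --- is what closes the argument. The remaining ingredients (continuity of the width function, monotonicity of the supporter map along $\bd(R)$, and Claim 2 of \cite{L2}) are routine.
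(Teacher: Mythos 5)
Your overall strategy is genuinely different from the paper's: you first establish the width identity ${\rm width}_M(R)=\Delta(R)$ for all $M$ supporting $R$ at $a$, and then read off the circular arc from the parametrization $b_M=\cos\Delta(R)\cdot a+\sin\Delta(R)\cdot o_M$. The paper works in the opposite direction: using Theorem 4 of \cite{L2}, Lemma \ref{between} and Claim 2 of \cite{L2} it first shows that every extreme point of the far boundary piece $P$ lies at distance $\Delta(R)$ from $a$, then excludes geodesic subarcs of $P$ by a direct width estimate (Theorem 1 of \cite{L2}), and only afterwards deduces the width identity from the resulting shape of $P$. Your opening reduction ($M_1$ and $M_2$ must be the extreme supporting hemispheres at $a$, so every $M$ with $\prec\!M_1MM_2$ supports $R$ only at $a$) and your final paragraph are sound, modulo the slip ``$\bd(M^*)\subseteq\bd(R)$'', where you mean $R\cap\bd(M^*)\subseteq\bd(R)$.

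The gap is in your third paragraph. You invoke Theorem \ref{main} for the bracket $(N',N'')$ in exactly the degenerate configuration $a_{N'}=a_{N''}$, and your contradiction rests entirely on the equality $|a_{N'}a_{N''}|=|b_{N'}b_{N''}|$ there. But the paper's proof of that equality defines $H_a$ as \emph{the} hemisphere supporting $R$ at every point of the arc $a_1a_2$ and then runs a congruence argument on the non-degenerate quadrangle $L_1\cap L_2$; both steps presuppose $a_1\neq a_2$ (and $b_1\neq b_2$). So Theorem \ref{main}, as actually proved, does not cover the case you need --- and that case is essentially the content of Proposition \ref{a1=a2} itself, which is why the paper treats it separately. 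The step is repairable without circularity: the first half of the proof of Theorem \ref{main} still shows that the boundary piece from $b_{N'}$ to $b_{N''}$ not containing $a$ is a geodesic arc; if $b_{N'}\neq b_{N''}$, let $H_b$ be the hemisphere supporting $R$ along that arc; since $N'\neq M_2$ and $N''\neq M_1$, neither $N'$ nor $N''$ can be the left (respectively right) supporting hemisphere at $a$, so Lemma \ref{leftright} forces $(N')^*=H_b=(N'')^*$, whence $N'=N''$ by the uniqueness of $K^*$ for thickness below $\frac{\pi}{2}$ --- the desired contradiction. As written, however, the bare citation of Theorem \ref{main} does not carry the weight you place on it.
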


\begin{proof}

For shortness, by $a$ denote the point $a_1=a_2$, and by $P$ the piece of the boundary of $R$ from $b_1$ to $b_2$ according to the positive orientation, so this piece which does not contain $a$. 

Let $x\in P$ be an extreme point  of $R$ different from $b_1$ and $b_2$.
By Theorem 4 of \cite{L2} there exists a lune $L$ of thickness $\Delta(R)$ containing $R$ such that $x$ is the center of one of the semicircles bounding $L$.
The lune $L$ has the form $K\cap K^*$, where $K^*$ is the  (unique, by Part I of Theorem 1 of \cite{L2}) hemisphere supporting $R$ at $x$.
Since $x$ lays strictly between $b_1$ and $b_2$ on the boundary of $R$ and since the different points $b_1,x,b_2$ are the centers of the semicircles $M_1^*/M_1, K^*/K, M_2^*/M_2$, respectively, we conclude that the hemisphere $K^*$ satisfies $\prec M_1^*K^*M_2^*$.  
Thus, by Lemma \ref{between} we obtain $\prec M_1KM_2$ and, as a consequence, $a$ is the only point of support of $R$ by $K$.
By Claim 2 of \cite{L2} the center of $K/K^*$ belongs to $R$, which implies that this center is at the point $a$.
Therefore $|ax|=\Delta (R)$.
Since $x$ is an arbitrary extreme point of $P$, we conclude that every extreme point of $P$ is in the distance $\Delta (R)$ from~$a$.

Assume for a while that $P$ contains an arc $x_1x_2$. 
By the preceding paragraph, $x_1$ and $x_2$ are in the distance $\Delta (R)$ from $a$.
Observe that all points laying strictly inside this arc are in the distance less than $\Delta (R)$ from $a$.
Let $H(o)$ be the hemisphere supporting $R$ such that the arc $oa$ intersects the arc $x_1x_2$ in the point $y$ different from $x_1$ and $x_2$.
Since $|ay|< \Delta (R)$, the disk centered at $o$ with radius $\frac{\pi}{2}-\Delta (R)$ does not touch $R$.
Therefore by Theorem 1 of \cite{L2} the lune $H(o)\cap H(o)^*$ is of thickness less than $\Delta (R)$.
It contradicts the fact that $R$ has thickness $\Delta (R)$.
Thus we conclude that $P$ does not contain any arc.

 From the two preceding paragraphs we conclude that all points of $b_1b_2$ are extreme and all of them are in the distance $\Delta (R)$ from $a$. 
Consequently, we obtain the first thesis of our proposition

For the proof of the second part of this proposition take any hemisphere $M$ satisfying $\prec M_1MM_2$. 
Denote by $B$ the disk of radius $\frac{\pi}{2}-\Delta (R)$ concentric with $M$.
By Part I of Theorem 1 from \cite{L2} there is a unique hemisphere $M^*$ and the point of supporting $R$ by $M^*$ is the point where $B$ touches $R$. 
Moreover, $\Delta (M \cap M^*) = \frac{\pi}{2}- \left( \frac{\pi}{2}-\Delta (R)\right) = \Delta (R)$. 
So we get the second part of our proposition. 
\end{proof} 


\begin{thm} \label{segment}
Let $R \subset S^2$ be a reduced body with $\Delta (R) < \frac {\pi}{2}$.
Assume that $M$ is a supporting hemisphere of $R$ such that the intersection of $\bd(M)$ with $\bd (R)$ is a non-degenerated arc  $x_1x_2$. 
Then ${\rm width}_M (R) = \Delta (R)$, and the center of $M/M^*$ belongs to $x_1x_2$.
\end{thm}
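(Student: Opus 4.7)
My plan is to first establish the identity ${\rm width}_M(R) = \Delta(R)$; once this holds, Claim~2 of \cite{L2} applied to the minimum-width supporting hemisphere $M$ places the center $a$ of $M/M^*$ in $\bd(R)$, and since $a \in \bd(M)$ by construction, we obtain $a \in \bd(M) \cap \bd(R) = x_1 x_2$. So the real task is proving the width identity, which I would attempt by contradiction.

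Suppose ${\rm width}_M(R) > \Delta(R)$. By continuity of the width function (Theorem~2 of \cite{L2}), there is a maximal open arc of supporting hemispheres containing $M$ on which every width strictly exceeds $\Delta(R)$; its endpoints $M_-$ and $M_+$ satisfy $\prec \! M_- M M_+$, both have width equal to $\Delta(R)$, and the hypotheses of Theorem~\ref{main} are met for the pair $M_-,\, M_+$. Applying Theorem~\ref{main} produces arcs $a_1 a_2$ and $b_1 b_2$ in $\bd(R)$, and its proof shows that one endpoint of $a_1 a_2$ coincides with a vertex of the spherical quadrangle $L_1 \cap L_2$ lying on $\bd(M_-) \cap \bd(M_+)$. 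I handle the case $H_a = M_-$, in which $a_2$ is this vertex $x$; the case $H_a = M_+$ is symmetric.

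Next I show $x \in x_1 x_2$. Both $M_-$ and $M_+$ support $R$ at $x$, so the supporting-hemisphere range at $x$ is a closed arc in the supporting-hemisphere circle containing both; since this range has angular length at most $\pi$ and the arc from $M_-$ to $M_+$ that passes through $M$ is the shorter of the two arcs joining them, the range must include $M$. Hence $M$ supports $R$ at $x$, giving $x \in \bd(M) \cap \bd(R) = x_1 x_2$. As $x$ is a corner of $R$ (two distinct supporting hemispheres $M_-,\, M_+$ meet there), $x$ is an endpoint of the arc; orienting positively so that arc $a_1 a_2$ ends at $x = a_2$ while $x_1 x_2$ begins at $x_1$, the only consistent identification is $x = x_1$.

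The contradiction now follows from an orientation analysis at $x_1 = a_2$: moving positively enters $x_1 x_2$ with supporting hemisphere $M$, moving negatively enters $a_1 a_2$ with supporting hemisphere $M_-$; therefore $H_r(x_1) = M_-$ and $H_l(x_1) = M$, and the range of supporting hemispheres at $x_1$ is the arc from $M_-$ to $M$ in the positive direction of the supporting-hemisphere circle. Yet the quadrangle places $M_+$ in this same range, while $\prec \! M_- M M_+$ situates $M_+$ strictly beyond $M$, contradicting the preceding description. I expect the most delicate step to be justifying the containment of $M$ in the supporting-hemisphere range at $x$ in the third paragraph; it rests on proving that the arc from $M_-$ through $M$ to $M_+$ has angular length at most $\pi$, which I would extract from $\Delta(R) < \pi/2$ together with Lemma~\ref{between}, since otherwise the minimum-width hemisphere $M_-^*$ would fall inside the gap and contradict its maximality.
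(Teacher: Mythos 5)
Your reduction of the second claim to the first via Claim 2 of \cite{L2}, and your contradiction setup (continuity of the width producing $M_-$, $M_+$ with $\prec \! M_-MM_+$ and the hypotheses of Theorem \ref{main}) coincide with the paper's. The genuine gap is the step you yourself flag as delicate: proving that $M$ supports $R$ at the quadrangle vertex $x$. Your argument that ``the arc from $M_-$ to $M_+$ that passes through $M$ is the shorter of the two arcs joining them, [so] the range must include $M$'' is circular: the phrase ``shorter arc'' only makes sense once all three centers lie on the great circle of centers of hemispheres whose boundary passes through $x$, and the center of $M$ lies on that circle precisely when $x \in \bd(M)$ --- which is what you are trying to prove. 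If instead ``arc'' is read in the cyclic order $\prec$ of all supporting hemispheres of $R$, then ``shorter'' has no metric meaning that connects to the bound of $\pi$ on the normal cone at $x$. The proposed repair also misfires: if the normal cone at $x$ were the block running from $M_+$ positively around to $M_-$, then $M_-^*$ would lie \emph{inside that normal cone} (i.e., would support $R$ at $x$), not inside the gap $(M_-,M_+)$, so no conflict with the maximality of the interval on which the width exceeds $\Delta(R)$ arises; excluding this configuration requires an argument you have not supplied.

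The step can be closed much more directly, and this is in effect what the paper does. Since $\prec \! M_-MM_+$, traversing $\bd(R)$ positively from $a_1 \in \bd(M_-)\cap R$ one must meet \emph{all} of $\bd(M)\cap R = x_1x_2$ before meeting any point of $\bd(M_+)\cap R$, in particular before $a_2$. By the first part of Theorem \ref{main} the boundary piece from $a_1$ to $a_2$ is the single great-circle arc $a_1a_2$; hence the non-degenerate arc $x_1x_2$ is a sub-arc of $a_1a_2$, so $a_1a_2 \subset \bd(M)$ and $M$ is the hemisphere $H_a$ supporting $R$ along all of $a_1a_2$. But the paragraph before last of the proof of Theorem \ref{main} shows that $H_a \in \{M_-, M_+\}$, while $\prec \! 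M_-MM_+$ forces $M \neq M_-$ and $M \neq M_+$ --- a contradiction that finishes the proof without any discussion of the normal cone at $x$, of the identification $x = x_1$, or of left and right supporting hemispheres. I recommend replacing your third and fourth paragraphs by this ordering argument.
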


\begin{proof}
First, let us show that ${\rm width}_M (R) = \Delta (R)$. 
Assume the opposite, i.e., that ${\rm width}_M (R) > \Delta (R)$.
Then applying the continuity of the width (see Theorem 2 of \cite {L2}), we conclude that there exist supporting hemispheres $M_1$ and $M_2$ of $R$ such that ${\rm width}_{M_1}(R) = \Delta (R) = {\rm width}_{M_2}(R)$ and ${\rm width}_M (R) > \Delta (R)$ for every hemisphere $M$ supporting $R$ strictly between $M_1$ and $M_2$.
Clearly, $M_1$ supports $R$ at $x_1$ and $M_2$ supports $R$ at $x_2$.
Denote by $x_0$ the intersection point of the semicircles $M_1/M_1^*$ and $M_2/M_2^*$.
Since $x_0 \notin x_1x_2$ and $x_1x_2 \subset \bd (R)$, we have $x_0 \notin R$.
But on the other hand, by the result of the paragraph before the last of the proof of Theorem \ref{main} we know that $x_1 = x_0$ or $x_2 = x_0$, which means that $x_0\in R$.
This contradiction implies that ${\rm width}_M (R) = \Delta (R)$.

The second part of our theorem is obvious by $(M/M^*) \cap R = x_1x_2$ and since from Claim 2 in \cite{L2} we know that the center of $M/M^*$ belongs to $R$.
\end{proof}


\begin{thm} 
 If $M$ is an extreme supporting hemisphere of a reduced spherical body $R \subset S^2$, then ${\rm width}_M (R) = \Delta (R)$. 
\end{thm}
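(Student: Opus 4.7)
My plan is proof by contradiction in the case $\Delta(R) < \frac{\pi}{2}$; the case $\Delta(R) \geq \frac{\pi}{2}$ reduces to $R$ being of constant width, so the conclusion is automatic there. Suppose an extreme supporting hemisphere $M$ satisfies ${\rm width}_M(R) > \Delta(R)$. If $\bd(M) \cap \bd(R)$ contains a non-degenerate arc, Theorem \ref{segment} immediately gives ${\rm width}_M(R) = \Delta(R)$, contradicting the assumption; so $M$ supports $R$ at a single boundary point $p$, and being extreme, we may assume $M = H_r(p)$ (the case $M = H_l(p)$ is symmetric).

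Next I bracket $M$ by two thickness-realizing supporting hemispheres. By continuity of ${\rm width}_{(\cdot)}(R)$ (Theorem 2 of \cite{L2}), the set $W$ of supporting hemispheres of $R$ with width equal to $\Delta(R)$ is closed and, since every lune of thickness $\Delta(R)$ containing $R$ contributes both of its bounding hemispheres to $W$, has at least two distinct elements. Since $M \notin W$, $M$ lies in an open arc component of the complement of $W$ in the circle of supporting hemispheres of $R$; let $M_1, M_2$ be its endpoints, indexed in the $\prec$-order so that $\prec \! M_1 M M_2$, ${\rm width}_{M_i}(R) = \Delta(R)$, and ${\rm width}_{M'}(R) > \Delta(R)$ for every $M'$ satisfying $\prec \! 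M_1 M' M_2$. Theorem \ref{main} then applies and yields the arcs $a_1a_2, b_1b_2 \subset \bd(R)$; moreover, as shown in its proof, the hemisphere $H_a$ supporting $R$ along $a_1a_2$ equals $M_1$ or $M_2$. Assume $H_a = M_1$; the other case is handled symmetrically with $H_b$.

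Finally I derive the contradiction. Since $M_1$ supports the whole arc $a_1a_2$, one has $M_1 = H_l(a_1) = H_r(a_2)$; and by Claim 2 of \cite{L2}, $M_2$ supports $R$ at $a_2$, so $M_2 \preceq H_l(a_2)$. Every hemisphere $M'$ with $\prec \! M_1 M' M_2$ must therefore support $R$ only at $a_2$: the interior of the arc $a_1a_2$ admits $M_1$ as its unique supporting hemisphere, while anything following $H_l(a_2)$ in the $\prec$-order would succeed $M_2$. Hence the unique support point $p$ of $M$ is $a_2$. Extremality of $M$ at $a_2$ forces $M \in \{H_r(a_2), H_l(a_2)\} = \{M_1, H_l(a_2)\}$; since $M \neq M_1$, we obtain $M = H_l(a_2) \succeq M_2$, contradicting $\prec \! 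M M_2$.

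The principal obstacle I anticipate is the bookkeeping in this last step: the sphere admits no global ``direction'' parametrization of supporting hemispheres, so the argument must be routed entirely through the ternary relation $\prec \! XYZ$ and the structural information furnished by Theorem \ref{main}, Lemma \ref{between}, and Claim 2 of \cite{L2}. Locating the point $p$ precisely at $a_2$, and then comparing $M$ with $M_2$ solely via $\preceq$, is where care is required.
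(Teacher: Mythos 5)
Your proof is correct in its main line, but it is a genuinely different argument from the paper's. The paper proves the statement directly by approximation: it takes the support point $p$ of $M$, cuts off small convex pieces $R_i$ of $R$ by chords $pp_i$ with $p_i \to p$, uses reducedness to bound the thickness of the lunes $M_i \cap M_i^*$ below $\Delta(R)$, and passes to the limit $M \cap M^*$ to get ${\rm width}_M(R) \leq \Delta(R)$; this works uniformly in $\Delta(R)$ and needs neither Theorem \ref{main} nor any case split. You instead run the structure theorem backwards: bracket $M$ between two width-minimizing hemispheres $M_1, M_2$ (the same continuity device the paper uses in Theorems \ref{segment} and \ref{strictly}), invoke Theorem \ref{main} and the dichotomy $H_a \in \{M_1, M_2\}$ from its proof, and show that $M$ would have to support $R$ only at $a_2$ and hence coincide with $H_r(a_2)=M_1$ or with $H_l(a_2)\,$, neither of which is strictly between $M_1$ and $M_2$. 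This buys a proof with no limiting process and no need to control the thickness of the cut-off pieces $R_i$, at the price of a case split at $\Delta(R)=\frac{\pi}{2}$; your appeal to Theorem \ref{constant} there is a forward reference but not a circular one, since the proof of Theorem \ref{constant} does not use the present theorem.

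Two loose ends deserve attention. First, Theorem \ref{main} does not exclude $a_1=a_2$ (and then $b_1=b_2$); in that degenerate case $H_a$ is undefined and the dichotomy you import from the proof of Theorem \ref{main} is unavailable, so your argument as written does not cover it. It can be patched: if $a_1=a_2$, both $M_1$ and $M_2$ support $R$ at this point, so either $M$ lies strictly between $H_r(a_1)$ and $H_l(a_1)$ and is therefore not an extreme supporting hemisphere, or else the positively oriented family from $M_1$ to $M_2$ sweeps past every hemisphere not supporting $R$ at $a_1$, which (via Theorem 4 and Claim 2 of \cite{L2}) forces all extreme points of $R$ onto boundary arcs through $a_1$ and then contradicts the bracketing by Theorem \ref{segment} applied to the remaining edge. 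Second, your remark that the alternative case is ``handled symmetrically with $H_b$'' is slightly off: transferring the order of the starred hemispheres back to $M$ via Lemma \ref{between} is not available, since that lemma requires ${\rm width}_{N_i}(R)=\Delta(R)$ for all three hemispheres and this fails for $M$; the clean symmetric closure is again at $a_1$, using $M_2=H_a=H_l(a_1)$ and $H_r(a_1)\preceq M_1$. Neither issue damages the approach, but both should be written out.
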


\begin{proof}
 Let $M$ be an extreme supporting hemisphere of $R$, say right, at a point $p$.
If the great circle bounding $M$ contains more than one point of $\bd (R)$, we apply Theorem \ref{segment}. 
Otherwise there is a sequence $\left\{p_i\right\}$ of different points of $\bd(R)$ monotonically convergent to $p$ from the  right and such that all the arcs $pp_i$ are not in $\bd(R)$.
The great circle containing $p$ and $p_i$ dissects $R$ into two closed convex subsets.
Denote by $R_i$ this of these subsets which does not contain $p_{i+1}$ and by $M_i$ the hemisphere supporting $R_i$ at  the points of the arc $pp_i$.
Since $R$ is reduced, ${\rm width}_M (R_i) < \Delta (R)$.
The lune $M_i\cap M_i^*$ has thickness ${\rm width}_M (R_i)$ and the limit of  lunes $M_i\cap M_i^*$, as $i$ tends to $\infty$, is the lune $M\cap M^*$. 
Hence the thickness of $M\cap M^*$, which is equal to ${\rm width}_M (R)$, cannot be greater than $\Delta (R)$.
Consequently, it is equal to $\Delta (R)$.
\end{proof}


\begin{pro}\label{hemi} 
Let $p$ be a point of a reduced body $R \subset S^2$ of thickness at most $\frac{\pi}{2}$. 
Then $R \subset H(p)$. 
\end{pro}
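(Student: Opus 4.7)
The plan is to argue by contradiction. Suppose $p\in R$ and $R\not\subset H(p)$, so there exists $q\in R$ with $|pq|>\pi/2$. I aim to exhibit a proper convex subbody $Z\subsetneq R$ with $\Delta(Z)=\Delta(R)$, directly contradicting the reducedness of $R$.

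The natural candidate is $Z:=R\cap H(p)$. It is convex as an intersection; it is a proper subset of $R$ since $q\in R\setminus H(p)$; and it has non-empty interior because $p\in R=\overline{\mathrm{int}(R)}$ ensures an interior point $p'\in R$ with $|pp'|<\pi/2$, and a small neighborhood of $p'$ lies in $\mathrm{int}(R)\cap\mathrm{int}(H(p))\subset\mathrm{int}(Z)$. The reducedness of $R$ then yields $\Delta(Z)<\Delta(R)$, and the proof reduces to the reverse inequality.

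To produce a lune of thickness exactly $\Delta(R)$ around $Z$, I would apply the preceding theorem at a boundary point of $R$ lying in $H(p)$: the resulting extreme supporting hemisphere $M$ satisfies $\mathrm{width}_M(R)=\Delta(R)$, and the corresponding lune $L=M\cap M^*$ of thickness $\Delta(R)\le\pi/2$ contains $R$. Writing $a$ for the center of $M/M^*$ and $b$ for the center of $M^*/M$, Claim~2 of \cite{L2} gives $a,b\in R$ with $|ab|=\Delta(R)$. Parameterizing $L$ with the center of $M$ at the north pole so that the corners of $L$ sit at $(\pi/2,\pm\pi/2)$, $a=(\pi/2,0)$ and $b=(\pi/2-\Delta(R),0)$, any $p\in L$ forces $\phi_p\in(-\pi/2,\pi/2)$; direct spherical-trigonometric computation then gives
$\cos|pa|=\sin\theta_p\cos\phi_p\ge 0$
and
$\cos|pb|=\sin\theta_p\cos\Delta(R)\cos\phi_p+\cos\theta_p\sin\Delta(R)\ge 0$,
so $|pa|,|pb|\le\pi/2$ and hence $a,b\in Z$.

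The main obstacle is the final step: showing that this lune $L$, which contains $Z$ and has thickness $\Delta(R)$, is actually a \emph{minimum}-thickness lune for $Z$. Since $a,b\in Z$ are the midpoints of the bounding semicircles of $L$ realizing its thickness $|ab|=\Delta(R)$, any thinner enclosing lune $L''$ would have to reorient so that the arc $ab$ of length $\Delta(R)$ lies along the corner-to-corner ``long'' direction of $L''$ rather than across its width. One must show this is incompatible with $Z$ containing, in addition to $a$ and $b$, enough of the long extent of $R$ inside $L$ (in particular the point $p$ itself). The apparatus of Lemma~\ref{between} together with the structural description of reduced bodies from Theorem~\ref{main} should provide the rigidity needed to rule out any such thinner lune, completing the contradiction $\Delta(Z)=\Delta(R)>\Delta(Z)$.
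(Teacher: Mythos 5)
Your proposal has a genuine gap at exactly the point you flag as ``the main obstacle'': you never establish $\Delta(Z)\ge\Delta(R)$ for $Z=R\cap H(p)$. Producing one lune $L\supset Z$ of thickness $\Delta(R)$ with the centers $a,b$ of its bounding semicircles lying in $Z$ does not bound $\Delta(Z)$ from below --- $\Delta(Z)$ is the \emph{minimum} thickness over all lunes containing $Z$, and a differently oriented, thinner lune could a priori contain $Z$ even while containing the arc $ab$ of length $\Delta(R)$ (an arc of length $\Delta(R)$ fits comfortably inside much thinner lunes if it runs along their long direction, as you yourself note). Ruling this out would require showing that for \emph{every} hemisphere $K$ supporting $Z$ one has ${\rm width}_K(Z)\ge\Delta(R)$, i.e.\ that cutting $R$ by $H(p)$ removes no width-realizing points in any direction --- which is essentially equivalent to the proposition you are trying to prove. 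The appeal to Lemma~\ref{between} and Theorem~\ref{main} ``providing the rigidity'' is a hope, not an argument, so the contradiction $\Delta(Z)=\Delta(R)$ is not obtained.

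Ironically, the spherical-trigonometric computation you carry out is the entire content of the correct (and much shorter) direct proof, just pointed the wrong way. The paper argues as follows: if $p$ is an \emph{extreme} point of $R$, Theorem~4 of \cite{L2} gives a lune $L\supset R$ of thickness $\Delta(R)\le\frac{\pi}{2}$ with $p$ itself as the center of one of its bounding semicircles; your computation $\cos|px|\ge 0$ for all $x\in L$ then reads $L\subset H(p)$, hence $R\subset H(p)$, with no contradiction argument needed. For a non-extreme boundary point $p\in e_1e_2$ ($e_1,e_2$ extreme) one checks that the lune $H(e_1)\cap H(e_2)\supset R$ has both bounding semicircles, hence itself, inside $H(p)$; an interior point is handled the same way via two boundary points. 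I recommend abandoning the contradiction via $Z=R\cap H(p)$ and rewriting along these direct lines.
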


\begin{proof} 
If $p$ is an extreme point of $R$, then by the two-dimensional case of Theorem 4 from \cite{L2} there exists a lune $L \supset R$ of thickness $\Delta(R)$ with $p$ as the center of one of the semicircles bounding $L$.
Observe that $L \subset H(p)$.
Hence $R \subset H(p)$, which ends the proof in this case.

If $p$ is a boundary but not an extreme point of $\bd (R)$, then there exist extreme points $e_1, e_2$ of R such that $p \in e_1e_2$.
We already know that $R \subset H(e_i)$ for $i=1,2$.
Thus $R \subset H(e_1) \cap H(e_2)$.
Observe that both the semicircles bounding the lune $H(e_1) \cap H(e_2)$ are contained in $H(p)$. 
Hence this lune is contained in $H(p)$. 
Therefore $R \subset H(p)$, which ends the proof in this case.

If $p$ is not a boundary point of $R$, then there exist boundary points $b_1,b_2$ of $R$ such that $p \in b_1b_2$. 
For analogical reason as in the preceding paragraph, we have $R \subset H(b_1) \cap H(b_2) \subset H(p)$, which ends the proof.
\end{proof}

This Proposition is applied in the proofs of Theorems \ref{constant} and \ref{question}.


\section{Reduced bodies of thickness at least $\frac{\pi}{2}$}

Proposition \ref{smooth} and Theorem \ref{constant} say on the shape of reduced bodies of thickness at least $\frac{\pi}{2}$.
The proof of first of them requires the following lemma.

 
\begin{lem}\label{l_smooth}
 Let $C\subset S^d$ be a spherical convex body with $\Delta (C) > \frac{\pi}{2}$ and let $L \supset C$ be a lune such that $\Delta (L) = \Delta (C)$.
Each of the centers of the $(d-1)$-dimensional hemispheres bounding $L$ belongs to the boundary of $C$ and both they are smooth points of the boundary of $C$.  
\end{lem}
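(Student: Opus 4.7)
The plan is to handle the two assertions separately; the boundary claim reduces to a direct citation, while the smoothness claim requires a tilt-and-contradict argument exploiting $\Delta(L) > \frac{\pi}{2}$. Throughout, let $\gamma, \eta$ denote the centers of $G, H$ and let $g, h$ denote the centers of $G/H, H/G$, regarded as unit vectors in $\mathbb{R}^{d+1}$. A preliminary geometric observation is that $\gamma, \eta, g, h$ all lie in a common $2$-plane $\Pi = \mathrm{span}(g, h) = \mathrm{span}(\gamma, \eta)$ of $\mathbb{R}^{d+1}$, because the great circle through $g$ and $h$ that realizes $\Delta(L)$ is perpendicular to $\bd(G)$ at $g$ and to $\bd(H)$ at $h$ and therefore passes through $\gamma$ and $\eta$. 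From this one reads off
\[
\gamma \cdot \eta = -\cos\Delta(L), \qquad \gamma \cdot h = g \cdot \eta = \sin\Delta(L).
\]

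The boundary claim is then immediate: Claim 2 of \cite{L2} places $g$ in $C$, and since $C \subset L \subset G$ while $g \in \bd(G)$, the point $g$ cannot lie in $\inter(C)$; hence $g \in \bd(C)$, and $h \in \bd(C)$ for the same reason.

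For smoothness, I argue by contradiction. Suppose there is a supporting hemisphere $G' = H(\gamma')$ of $C$ at $g$ with $\gamma' \neq \gamma$. Since $C$ has nonempty interior, $\gamma' \neq -\gamma$; and since $g \cdot \eta = \sin\Delta(L) > 0$ while $g \cdot \gamma' = 0$, also $\gamma' \neq \pm\eta$. Thus $L' := G' \cap H$ is a proper lune containing $C$, so $\Delta(L') \geq \Delta(C) = \Delta(L)$; the goal is to contradict this by proving $\Delta(L') < \Delta(L)$. Using that $d \geq 2$, pick a unit vector $v \in \{g, \gamma\}^\perp$ and write $\gamma' = \cos t \cdot \gamma + \sin t \cdot v$ for some $t \in (0, \pi)$. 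Since $\eta \in \Pi$ while $v \perp \Pi$, one obtains
\[
\gamma' \cdot \eta = -\cos t \cos\Delta(L), \qquad \gamma' \cdot h = \cos t \sin\Delta(L).
\]
The containment $h \in C \subset G'$ forces $\gamma' \cdot h \geq 0$, hence $\cos t \geq 0$, so $t \in (0, \frac{\pi}{2}]$ and $\cos t \in [0, 1)$. Invoking $\Delta(L) > \frac{\pi}{2}$ (so $-\cos\Delta(L) > 0$) yields the strict inequality $\gamma' \cdot \eta < \gamma \cdot \eta$, which says that the angle between $\gamma'$ and $\eta$ strictly exceeds $\pi - \Delta(L)$. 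Consequently
\[
\Delta(L') = \pi - \arccos(\gamma' \cdot \eta) < \Delta(L) = \Delta(C),
\]
the desired contradiction. Hence $g$ is smooth, and swapping $G \leftrightarrow H$ handles $h$.

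The main obstacle I anticipate is the coplanarity step $\gamma, \eta \in \mathrm{span}(g, h)$; everything after that is essentially a single inner-product computation. Once the coplanarity is in place, the perturbation direction $v$ is automatically orthogonal to $\eta$, and tilting $\gamma$ around $g$ converts into a strict decrease of $\gamma' \cdot \eta$ precisely because $\gamma \cdot \eta > 0$. This positivity is equivalent to $\Delta(C) > \frac{\pi}{2}$; at the borderline $\Delta(C) = \frac{\pi}{2}$ one has $\gamma \cdot \eta = 0$ and the argument neither can nor should go through.
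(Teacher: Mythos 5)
Your proof is correct, and it takes a genuinely different route from the paper's. The paper deduces smoothness from Part III of Theorem 1 of \cite{L2}: since $\Delta(C)>\frac{\pi}{2}$, the ball $B$ of radius $\Delta(C)-\frac{\pi}{2}$ centred at the centre of $K$ is contained in $C$ and touches $\bd(C)$ exactly at the centre $t$ of $K^*/K$ (Corollary 2 of \cite{L2}), and a boundary point admitting an internally tangent ball is automatically smooth. You instead argue directly in $\mathbb{R}^{d+1}$: any second supporting hemisphere at $g$ tilts $\gamma$ within $g^\perp$, and because $\gamma\cdot\eta=-\cos\Delta(L)>0$ exactly when $\Delta(L)>\frac{\pi}{2}$, the tilt strictly decreases $\gamma'\cdot\eta$, hence strictly decreases the thickness $\pi-\arccos(\gamma'\cdot\eta)$ of the lune $G'\cap H\supset C$, contradicting the characterization of $\Delta(C)$ as the minimal thickness of a lune containing $C$. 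Your computation checks out (the coplanarity of $\gamma,\eta,g,h$, the values $\gamma\cdot\eta=-\cos\Delta(L)$ and $\gamma\cdot h=g\cdot\eta=\sin\Delta(L)$, and the thickness formula are all correct; in fact the step $\gamma'\cdot h\ge 0$ is not even needed, since $\cos t<1$ already forces $\gamma'\cdot\eta<\gamma\cdot\eta$ once $-\cos\Delta(L)>0$). Your version is more self-contained — it uses only Claim 2 of \cite{L2} and the minimal-lune definition of thickness, and it isolates precisely where the hypothesis $\Delta(C)>\frac{\pi}{2}$ enters — whereas the paper's version is shorter given the machinery of \cite{L2} and additionally identifies the touching point of the inscribed ball, a fact reused elsewhere. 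One small point worth making explicit: to invoke Claim 2 you should note, as the paper does in its first line, that both hemispheres bounding $L$ must support $C$ (otherwise $L$ could be narrowed), so that $H$ is indeed a legitimate choice of $G^*$.
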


\begin{proof}
We have $L = K \cap M$, where $K$ and $M$ are two hemispheres.
Both of them must support $C$, because in the opposite case we could find a narrower lune than $L$ containing $C$, which, by the assumption $\Delta (L) = \Delta (C)$, is impossible.
For instance, let us show the thesis of our lemma for the $(d-1)$-dimensional hemisphere $M/K$ bounding $L$.
Since $\Delta(C) > \frac{\pi}{2}$, according to Part III of Theorem 1 from \cite{L2} the hemisphere $M$ is a (possibly non-unique) hemisphere fulfilling $M = K^*$. 
Denote by $k$ the center of $K$.
After the just mentioned Part III, the hemisphere $M$ supports $C$ at exactly one point $t$ of the set $\bd(C) \cap B$, where $B$ denotes the largest ball with center $k$ contained in $C$. 
By Corollary 2 of \cite{L2} the point $t$ is the center of $M/K$.
Since $t$ belongs to $\bd(C) \cap B$, it is a smooth point of $\bd(C)$, which ends the proof. 
\end{proof}


\begin{pro}\label{smooth}
 Every reduced spherical body $R \subset S^d$ of thickness over $\frac{\pi}{2}$ is smooth.
\end{pro}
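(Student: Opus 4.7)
I would argue by contradiction: suppose $p \in \bd(R)$ is not smooth, so $R$ admits two distinct supporting hemispheres at $p$. My strategy is to exhibit a lune $L \supset R$ with $\Delta(L) = \Delta(R)$ such that $p$ is the center of one of the two $(d-1)$-dimensional hemispheres bounding $L$. Once this is achieved, Lemma \ref{l_smooth} (applicable since $\Delta(R) > \frac{\pi}{2}$) forces $p$ to be a smooth point of $\bd(R)$, which contradicts the choice of $p$.

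The easy case is when $p$ is extreme. Then Theorem~4 of \cite{L2} directly produces a lune $L \supset R$ of thickness $\Delta(R)$ with $p$ as the center of one of the bounding $(d-1)$-dimensional hemispheres, so Lemma \ref{l_smooth} finishes this case immediately.

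If $p$ is not extreme, then $p$ lies in the relative interior of a nondegenerate spherical arc $ab \subset R$, and since $p \in \bd(R)$, convexity forces the entire arc $ab$ to lie in $\bd(R)$. Any supporting hemisphere $H$ at $p$ must contain $ab$ in its bounding great $(d-1)$-sphere: otherwise the arc would have to stay inside $H$ while crossing $\bd(H)$ transversally at $p$, which is impossible. For $d=2$ this uniquely determines $H$ (as the hemisphere whose bounding great circle is the great circle through $ab$, with $R$ on the correct side), so $p$ would be smooth — contradicting our assumption. Hence for $d=2$ the non-extreme case cannot occur, and the two cases together finish the proof.

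The main obstacle is dimension $d \geq 3$, where ``bounding $(d-1)$-sphere contains the great circle of $ab$'' leaves a $(d-2)$-parameter family of supporting hemispheres at $p$, so non-smoothness is not ruled out directly. To handle this I would return to reducedness: cut off a small neighborhood of $p$ by a hemisphere $K$ to form $Z = R \cap K \subsetneq R$, obtain $\Delta(Z) < \Delta(R)$ from the definition of reducedness, and pass to a limit as the cut shrinks to $p$. By the continuity of thickness, the narrowest lunes of the shrinking bodies converge to a lune $L_0 \supset R$ with $\Delta(L_0) = \Delta(R)$, and the limit point of a center of a bounding hemisphere of $L_0$ should be $p$ itself — because if neither center of the narrowest lune of $Z$ ever lay on the cut boundary, then that lune would already contain $R$, contradicting $\Delta(L_0) < \Delta(R)$ along the sequence. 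Then Lemma \ref{l_smooth} applied to $L_0$ places $p$ in the smooth part of $\bd(R)$, again a contradiction. The delicate point is making this convergence of centers rigorous, in particular ruling out the possibility that both centers migrate away from $p$ in the limit.
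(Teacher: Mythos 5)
Your treatment of extreme points is exactly the paper's proof: Theorem 4 of \cite{L2} produces a lune of thickness $\Delta(R)$ having the extreme point as the center of one of its bounding $(d-1)$-dimensional hemispheres, and Lemma \ref{l_smooth} then gives smoothness there. Your $d=2$ argument for non-extreme boundary points is also correct. The paper is terser at this stage: it simply asserts that smoothness of all extreme points implies smoothness of $R$, leaving the passage from extreme points to arbitrary boundary points implicit.

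The genuine gap is in your $d\geq 3$ case. The key step of the limiting argument --- ``if neither center of the narrowest lune of $Z=R\cap K$ lay on the cut boundary, then that lune would already contain $R$'' --- does not follow: a lune of minimal thickness containing $Z$ is the intersection of two hemispheres containing $Z$, and these hemispheres may slice through the removed cap near $p$ even though the centers of their bounding $(d-1)$-dimensional hemispheres lie on $\bd(R)$ far from the cut. So you cannot conclude that the limit lune has $p$ as a center, and the contradiction via Lemma \ref{l_smooth} is not reached. Fortunately the whole limiting construction is unnecessary. If $p$ is a non-extreme boundary point, let $F$ be the minimal face of $R$ containing $p$, so that $p$ lies in the relative interior of $F$. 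Every hemisphere supporting $R$ at $p$ must contain $F$ in its bounding great $(d-1)$-sphere (your own tangency observation for the arc $ab$, applied to arcs through $p$ inside $F$), hence supports $R$ at every extreme point $e$ of $F$; such an $e$ is an extreme point of $R$ and is therefore smooth by the first part. Since every hemisphere supporting $R$ at $p$ also supports $R$ at $e$, and there is only one of the latter, $p$ is smooth. This is precisely the standard fact the paper invokes tacitly in its final sentence.
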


\begin{proof}
In order to see this recall that for every extreme point $e$ of $R$ there exists a lune $L$ 
as in Theorem 4 of \cite{L2} fulfilling $\Delta (L) = \Delta (R)$, so that $e$ is the center of one of the semicircles bounding $L$. 
By Lemma \ref{l_smooth} we obtain that $e$ is a smooth point of the boundary of $R$. 
Since every extreme point of $R$ is a smooth boundary point of $R$, we conclude that $R$ is a smooth spherical convex body.
\end{proof}

The thesis of Proposition \ref{smooth} does not hold true for every thickness at most $\frac{\pi}{2}$.
For instance, on $S^2$ the regular spherical triangle of any thickness at most $\frac{\pi}{2}$ is reduced but not smooth. 


\begin{thm}\label{constant} 
Every reduced spherical convex body $R \subset S^2$ such that $\Delta (R) \geq \frac{\pi}{2}$ is a spherical body of constant width $\Delta (R)$. 
\end{thm}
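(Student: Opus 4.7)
The plan is to split the analysis into the two cases $\Delta(R) = \frac{\pi}{2}$ and $\Delta(R) > \frac{\pi}{2}$, and in each to verify that ${\rm width}_M(R) = \Delta(R)$ for every supporting hemisphere $M$ of $R$.

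In the case $\Delta(R) = \frac{\pi}{2}$ I would invoke Proposition \ref{hemi}. Given a supporting hemisphere $M$, choose $M^{*}$ so that the lune $M \cap M^{*}$ realises ${\rm width}_M(R)$, and let $a$ and $b$ be the centers of the semicircles $M/M^{*}$ and $M^{*}/M$. Claim 2 of \cite{L2} places both $a$ and $b$ in $R$, and Proposition \ref{hemi} then yields $R \subset H(a)$, so $|ab| \leq \frac{\pi}{2}$. Since $|ab|$ is exactly the thickness of the lune $M \cap M^{*}$, this gives ${\rm width}_M(R) \leq \frac{\pi}{2} = \Delta(R)$, while the reverse inequality ${\rm width}_M(R) \geq \Delta(R)$ is immediate from the definition of $\Delta(R)$.

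In the case $\Delta(R) > \frac{\pi}{2}$ I would first use Proposition \ref{smooth} to conclude that $R$ is smooth. Fix a supporting hemisphere $M$ meeting $R$ at a point $p$. If $p$ is an extreme point of $R$, then Theorem 4 of \cite{L2} produces a lune $L$ containing $R$ with $\Delta(L) = \Delta(R)$ and $p$ the center of one of its bounding semicircles; the hemisphere of $L$ whose boundary contains $p$ supports $R$ at $p$, hence coincides with $M$ by smoothness, and we obtain ${\rm width}_M(R) \leq \Delta(L) = \Delta(R)$. If $p$ is not extreme, then $p$ lies in the relative interior of a maximal arc $\alpha \subset \bd(R)$; smoothness forces $\bd(M)$ to be the great circle carrying $\alpha$ (two great circles tangent at a point must coincide), so the unique supporting hemisphere at every point of $\alpha$, including at its endpoints, is $M$. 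A short maximality argument shows that the endpoints of $\alpha$ are extreme --- otherwise such an endpoint would lie in the relative interior of another arc $\beta \subset \bd(R)$, which, by the same smoothness rigidity, would be forced into the great circle $\bd(M)$ and would therefore strictly extend $\alpha$. Applying the extreme case to such an endpoint yields ${\rm width}_M(R) = \Delta(R)$.

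The main obstacle is the non-extreme subcase of $\Delta(R) > \frac{\pi}{2}$, where one must carefully use smoothness and the rigidity of great circles to prove both that the supporting hemisphere is constant along a maximal boundary arc and that the arc's endpoints are extreme. Once these two observations are in place, the theorem follows by a short assembly of the results already proved in the paper.
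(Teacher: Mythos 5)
Your case $\Delta(R) > \frac{\pi}{2}$ is sound, and it actually takes a different route from the paper: after Proposition \ref{smooth} the paper simply quotes Theorem 5 of \cite{L2} (every smooth reduced spherical body is of constant width), whereas you reprove the needed instance directly by locating an extreme point at which the given hemisphere $M$ is the unique support (via the maximal-arc argument) and then applying Theorem 4 of \cite{L2} there. That is a legitimate, more self-contained substitute for the citation.

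The case $\Delta(R) = \frac{\pi}{2}$, however, contains a genuine gap. You invoke Claim 2 of \cite{L2} to place the center $a$ of $M/M^{*}$ in $R$, but that claim concerns lunes of thickness $\Delta(R)$ --- i.e.\ it presupposes ${\rm width}_M(R) = \Delta(R)$, which is exactly what you are trying to prove. For a lune $M \cap M^{*}$ that merely realizes ${\rm width}_M(R)$, only the center of the \emph{far} semicircle $M^{*}/M$ is guaranteed to lie in $R$ (this is Corollary 2 of \cite{L2}, which is what the paper uses at this point). The near center can genuinely fall outside a convex body: take a thin spherical triangle with a very obtuse angle at one end of its longest side and let $K$ support it along that side; then $K^{*}$ touches the triangle at the opposite vertex and the center of $K/K^{*}$ is the foot of the perpendicular dropped from that vertex to $\bd(K)$, which lies outside the side. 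So your appeal to $a \in R$ is circular. With only $b \in R$ available, Proposition \ref{hemi} gives $R \subset H(b)$, hence any extreme point $e$ of $R$ on $M/M^{*}$ satisfies $|eb| \le \frac{\pi}{2}$; if $\Delta(M \cap M^{*}) > \frac{\pi}{2}$ this only forces $e$ to be a \emph{corner} of the lune, and one still needs the further observation that then $M \cap H(e)$ is a lune of thickness $\frac{\pi}{2}$ containing $R$, contradicting the minimality of $M^{*}$. That closing step --- which is how the paper finishes --- is missing from your proposal.
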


\begin{proof}
If $\Delta (R) > \frac{\pi}{2}$, we apply Proposition \ref{smooth}.
By Theorem 5 of \cite{L2}, which says that every smooth reduced spherical body is a body of constant width, we obtain the thesis of our theorem. 

Consider the case when $\Delta (R) = \frac{\pi}{2}$.
Assume that there exists a hemisphere $K$ supporting $R$ such that for a fixed $K^*$ the lune $L = K \cap K^*$ has the thickness over $\frac{\pi}{2}$. 
From Corollary 2 in \cite{L2} we obtain that the center of the semicircle $K^*/K$ belongs to $R$.
Denote it by $b$. 

Clearly, there exists an extreme point $e$ of $R$ on the semicircle $K/K^*$.
By Proposition \ref{hemi} the body $R$ is contained in the hemisphere $M$ with the center at $e$.
Thus $|eb|\le \frac{\pi}{2}$. 
But since $\Delta(L) > \frac{\pi}{2}$, the only points of $K/K^*$ lying in the distance at most $\frac{\pi}{2}$ from $b$ are the corners of $L$. 
It means that $e$ is a corner of $L$ and moreover it is the only point of $K/K^*\cap R$.

Observe that since $e$ lies on the boundary of $K$, the lune $K\cap M$ has the thickness $\frac{\pi}{2}$. 
From the definition of ${\rm width}_K (R)$ we obtain that the lune $K\cap K^*$ is of thickness at most $\Delta (K\cap M)=\frac{\pi}{2}$ which contradicts our assumption. 
Thus $R$ is of constant width.
\end{proof}


\label{sec:5}


\section{The case of spherical bodies of constant width}
\label{sec:5}

Recall that every spherical convex body of constant width is reduced. 
Theorem \ref{constant} says that the opposite implication holds true for bodies of thickness at least $\frac{\pi}{2}$ on $S^2$, i.e. every reduced spherical convex body of thickness at least $\frac{\pi}{2}$ 
on $S^2$ is of constant width.
Observe that it does not hold true for reduced spherical convex bodies of thickness below $\frac{\pi}{2}$.
For example every regular spherical triangle of thickness below $\frac{\pi}{2}$ is reduced but is not of constant width. 


\begin{thm}\label{strictly} 
Every spherical strictly convex reduced body of thickness smaller than $\frac{\pi}{2}$ on $S^2$ is of constant width. 
\end{thm}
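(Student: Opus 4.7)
The plan is to argue by contradiction. Assume $R$ is not of constant width, so the set $U = \{M : {\rm width}_M(R) > \Delta(R)\}$ is a nonempty open subset of the circle of supporting hemispheres of $R$, by the continuity of the width (Theorem 2 of \cite{L2}). I choose supporting hemispheres $M_1, M_2$ as the endpoints of a connected component of $U$, labelled so that the component is the open positive arc $(M_1, M_2)$. Then ${\rm width}_{M_i}(R) = \Delta(R)$ for $i = 1, 2$, and ${\rm width}_M(R) > \Delta(R)$ whenever $\prec M_1 M M_2$, which is precisely the hypothesis of Theorem \ref{main}. That theorem produces arcs $a_1 a_2$ and $b_1 b_2$ of equal length in $\bd(R)$. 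Strict convexity of $R$ rules out non-degenerate arcs in $\bd(R)$, so both arcs collapse to points: $a_1 = a_2 = a$ and $b_1 = b_2 = b$. Thus $M_1 \ne M_2$ are two distinct supporting hemispheres at the corner $a$, while $M_1^*$ and $M_2^*$ both support $R$ at $b$.

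Next I show $\{M_1, M_2\} = \{H_r(a), H_l(a)\}$. Lemma \ref{leftright} applied to $L_i = M_i \cap M_i^*$ gives either $M_i \in \{H_r(a), H_l(a)\}$, or $M_i^*$ is simultaneously the right and left supporting hemisphere at $b$. The second alternative would force $b$ to be smooth; writing $N$ for the unique supporting hemisphere at $b$, we would have $M_i^* = N$, and since $M_{3-i}^*$ also supports at the smooth point $b$ we would have $M_{3-i}^* = N$ too. Both $M_1$ and $M_2$ would then realize the minimum ${\rm width}_N(R) = \Delta(R)$, and by the uniqueness of the dual hemisphere for bodies of thickness below $\frac{\pi}{2}$ (Part I of Theorem 1 of \cite{L2}) this forces $M_1 = M_2 = N^*$, contradicting $M_1 \ne M_2$. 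Hence each $M_i$ is extremal at $a$, and being distinct they exhaust $\{H_r(a), H_l(a)\}$.

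To fix the orientation, suppose $M_1 = H_l(a)$ and $M_2 = H_r(a)$; then the positive arc $(M_1, M_2)$ is the complement of the supporting-at-$a$ arc. Pick any extreme point $e \ne a$ of $R$ (which exists since $R$ has non-empty interior). By Theorem 4 of \cite{L2} some lune $L_e = M_e \cap M_e^*$ of thickness $\Delta(R)$ contains $R$ with $e$ as the center of $M_e/M_e^*$, so ${\rm width}_{M_e}(R) = \Delta(R)$ and $M_e$ supports $R$ at $e$. Strict convexity forces $\bd(M_e) \cap R = \{e\}$, so $M_e$ does not support at $a$; hence $M_e$ lies in the complementary arc $(M_1, M_2)$, contradicting ${\rm width}_M(R) > \Delta(R)$ there. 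Therefore $M_1 = H_r(a)$, $M_2 = H_l(a)$, and every supporting hemisphere at $a$ satisfies $\preceq M_1 M M_2$. Proposition \ref{a1=a2} now applies and gives ${\rm width}_M(R) = \Delta(R)$ for every $M$ with $\prec M_1 M M_2$, directly contradicting $(M_1, M_2) \subset U$. The main obstacle is the case analysis in the second paragraph: the ``middle'' possibility at the corner $a$ is ruled out only by the uniqueness of $N^*$, available precisely because $\Delta(R) < \frac{\pi}{2}$.
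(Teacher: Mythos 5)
Your proof is correct, and its first paragraph is exactly the paper's entire argument: continuity of ${\rm width}_K(R)$ produces the endpoints $M_1\ne M_2$ of a component of $\{M:{\rm width}_M(R)>\Delta(R)\}$, and Theorem \ref{main} then puts the arcs $a_1a_2$ and $b_1b_2$ into $\bd(R)$, contradicting strict convexity. The paper stops there, tacitly reading the conclusion of Theorem \ref{main} as a non-degenerate arc; you instead notice that strict convexity only forces $a_1=a_2$ and $b_1=b_2$, and you spend two further paragraphs disposing of this degenerate configuration. That extra work is sound: the appeal to Lemma \ref{leftright} together with the uniqueness of $N^*$ (Part I of Theorem 1 of \cite{L2}, which is where $\Delta(R)<\frac{\pi}{2}$ enters) correctly pins $\{M_1,M_2\}=\{H_r(a),H_l(a)\}$; the orientation is fixed correctly by exhibiting, via Theorem 4 of \cite{L2} and strict convexity, a hemisphere of minimal width in the complementary arc; and Proposition \ref{a1=a2} then yields the contradiction. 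So your argument is a more careful version of the paper's, at the price of invoking Lemma \ref{leftright} and Proposition \ref{a1=a2}, which the paper's proof of this theorem does not need. Note, though, that the degenerate case can be killed in one line: the arc joining the centers of the two bounding semicircles of a lune is orthogonal to both bounding great circles, so the ordered pair of centers determines the lune; hence $a_1=a_2$ and $b_1=b_2$ would give $L_1=L_2$ and therefore $M_1=M_2$ (the alternative $M_1=M_2^*$ is excluded since it would force $a_1=b_1$ while $|a_1b_1|=\Delta(R)>0$), contradicting $M_1\ne M_2$ immediately. Your paragraphs two and three are thus a valid but avoidable detour.
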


\begin{proof}
Assume that a spherical strictly convex body $R$ of thickness less than $\frac{\pi}{2}$ is reduced but not of constant width.
It means that there exists a hemisphere $M_0$ for which  ${\rm width}_{M_0} (R) > \Delta (R)$. 
Due to continuity arguments, there exist hemispheres $M_1$ and $M_2$ supporting $R$ such that ${\rm width}_{M_1}(R) = \Delta (R) = {\rm width}_{M_2}(R)$ and ${\rm width}_M (R) > \Delta (R)$ for every hemisphere $M$ supporting $R$ strictly between $M_1$ and $M_2$.
From Theorem \ref{main} we see that the arc with the end-points at the centers of the semicircles $M_1/M_1^*$ and $M_2/M_2^*$ belongs to the boundary of $R$. 
Thus $R$ is not strictly convex. 
\end{proof}

Here is, in a sense, an opposite theorem.


\begin{thm}\label{strictly2} Every spherical convex body of constant width smaller than $\frac{\pi}{2}$ on $S^2$ is strictly convex. 
\end{thm}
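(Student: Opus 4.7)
My plan would be to argue by contradiction. Suppose $C$ is a spherical convex body of constant width $w < \frac{\pi}{2}$ that is not strictly convex. Then $\bd(C)$ contains a non-degenerate arc $x_1x_2$, and the supporting hemisphere $M$ of $C$ whose bounding great circle contains this arc satisfies $\bd(M) \cap C \supset x_1x_2$, so $M$ supports $C$ at more than one point. My goal is to derive a contradiction by showing that $M$ must in fact support $C$ at a unique point.

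The main tool is Part I of Theorem 1 of \cite{L2}, recalled just above Lemma \ref{between} and used in the proof of Proposition \ref{a1=a2}: for every supporting hemisphere $N$ of a spherical convex body of thickness below $\frac{\pi}{2}$, the hemisphere $N^*$ is uniquely determined and supports the body at exactly one point, namely the unique point where the disk of radius $\frac{\pi}{2} - \Delta(C)$ concentric with $N$ touches $C$. I would apply this with $N = M^*$ to obtain that $(M^*)^*$ supports $C$ at precisely one point, and then verify that $(M^*)^* = M$. The latter identity is where the constant width hypothesis enters: since $\mathrm{width}_{M^*}(C) = w$ and the lune $M^* \cap M$ has thickness $w$, the hemisphere $M$ is itself a supporting hemisphere of $C$ for which the lune containing $M^*$ attains the minimum thickness $w = \mathrm{width}_{M^*}(C)$; by the uniqueness clause from Part I of Theorem 1 of \cite{L2} this minimizer is unique, forcing $(M^*)^* = M$. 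Combining the two conclusions, $M$ supports $C$ at a single point, contradicting $\bd(M) \cap C \supset x_1x_2$.

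The main conceptual step is the identification that the operation $N \mapsto N^*$ is involutive precisely under the constant width hypothesis (given thickness below $\frac{\pi}{2}$); for a merely reduced body this involution can fail, because $\mathrm{width}_{M^*}(C)$ need not equal $\Delta(C)$ and hence $M$ need not be the thickness minimizer for $M^*$. Once the involution is noticed, the single-support-point property of $N^*$ from Part I of Theorem 1 of \cite{L2} transfers back to $M$ itself and the contradiction is immediate; I expect no further technical obstacles beyond this observation.
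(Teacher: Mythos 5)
Your argument has a genuine gap at its central step: the claim that Part I of Theorem 1 of \cite{L2} gives that $N^*$ supports the body at exactly one point. That is not what the result says, and it is false. What Part I yields (and what is actually invoked in the proofs of Proposition \ref{a1=a2} and of the present theorem) is that the \emph{disk} of radius $\frac{\pi}{2}-{\rm width}_N(C)$ concentric with $N$ touches $C$ at exactly one point, namely the center of $N^*/N$, and that $N^*$ is unique. The hemisphere $N^*$ itself may perfectly well support $C$ along a non-degenerate arc: the disk is tangent to $\bd(N^*)$ at the touching point, while every other point of $\bd(N^*)$ --- including further points of $\bd(N^*)\cap C$ --- lies at distance greater than $\frac{\pi}{2}-{\rm width}_N(C)$ from the center of $N$, so its presence in $C$ is not excluded. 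A concrete counterexample to your version of the statement is the quarter of a disk $Q$ of radius $\Delta(Q)<\frac{\pi}{2}$ with apex $a$ and vertices $b_1,b_2$: for the hemisphere $M_1$ supporting $Q$ along the radius $ab_1$ one has ${\rm width}_{M_1}(Q)=\Delta(Q)$, the involution $(M_1^*)^*=M_1$ holds (it only needs ${\rm width}_{M_1^*}(Q)=\Delta(M_1\cap M_1^*)$, which is automatic here, not constant width), and yet $(M_1^*)^*=M_1$ supports $Q$ along the whole arc $ab_1$. So your two ingredients together would establish that the quarter of a disk is strictly convex, which is false. The same phenomenon appears in Theorem \ref{main}, where $M_2^*=H_b$ supports $R$ along the arc $b_1b_2$ even though ${\rm width}_{M_2}(R)=\Delta(R)<\frac{\pi}{2}$.

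Applied to your situation, Part I with $N=M^*$ only tells you that the disk of radius $\frac{\pi}{2}-w$ concentric with $M^*$ touches $C$ at the single point $c$, the center of $M/M^*$, which by Theorem \ref{segment} lies on $x_1x_2$; this is entirely consistent with $M=(M^*)^*$ supporting $C$ along all of $x_1x_2$, and the contradiction evaporates. The constant-width hypothesis is therefore never genuinely exploited. The paper's proof has to work considerably harder: it fixes an endpoint $e_1$ of the boundary arc, considers the one-parameter family of hemispheres supporting $W$ only at $e_1$ (those strictly between $N_1$, the hemisphere along the arc, and $N_2$, the one for which $e_1$ is the center of $N_2/N_2^*$), shows via Claim 2 and Part I of Theorem 1 of \cite{L2} that the corresponding centers of $N^*/N$ must sweep out a circular arc of radius $\Delta(W)$ centered at $e_1$ contained in $\bd(W)$, and then obtains a contradiction because one endpoint $t$ of that circular arc is shown to lie outside $W$. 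Some argument of this local, one-parameter kind (or another substitute) is needed; the global involution observation alone does not suffice.
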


\begin{proof}
Assume the opposite, i.e., that there exists a body $W$ of constant width below $\frac{\pi}{2}$ and different extreme points $e_1,e_2$ of $W$ such that the arc $e_1e_2$ belongs to $\bd (W)$.

Denote by $N_1$ the hemisphere supporting $W$ at every point of $e_1e_2$. 
Clearly, its bounding great circle contains $e_1e_2$. 
Since $W$ is of constant width, the lune $L_1 = N_1 \cap N_1^*$ has thickness $\Delta(W)$, and since $W$, as a body of constant width, is a reduced body, by Theorem \ref{segment} the center $c$ of the semicircle $N_1/N_1^*$ belongs to $e_1e_2$. 
Since $c$ must be different from $e_1$ or $e_2$, for instance assume that $c$ is not at $e_1$.

In the preceding and the next paragraph the hemispheres $N_1^*$ and $N_2^*$ supporting $W$ are unique by Part I of Theorem 1 from  \cite{L2}. 
The reason is that $W$ is of constant width below $\frac{\pi}{2}$.

By Theorem 4 of \cite{L2} there exists a hemisphere $N_2$ supporting $W$ at $e_1$ such that $e_1$ is the center of $N_2/N_2^*$.
Assume for a while that $N_1=N_2$. 
From the preceding paragraph we conclude that $N_1^*=N_2^*$ and so $N_1\cap N_1^*= N_2\cap N_2^*$. 
Therefore the center $c$ of $N_1/N_1^*$ coincides with the center $e_1$ of $N_2/N_2^*$. 
It contradicts $c\neq e_1$ and thus we obtain that the hemispheres $N_1$  and $N_2$ must be different.

Denote by $o_i$ the center of $N_i$ for $i=1,2$.
Observe that every hemisphere  supporting $W$ strictly between $N_1$ and $N_2$, supports $W$ only at $e_1$. 
Moreover, the center of every such a hemisphere lays on the arc $o_1o_2$ and vice-versa, every point of this arc is the center of a hemisphere supporting $W$ strictly between $N_1$ and $N_2$.

Let $t,u$ be points in the distance $\Delta (W)$ from $e_1$ such that $t\in e_1o_1$ and $u\in e_1o_2$. 
By Corollary 2  of \cite{L2} we see that $u$ is the center of $N_2^*/N_2$ and it belongs to $W$. 
Observe that $t$ belongs to the disk of radius $\frac{\pi}{2}-\Delta(W)$ with the center at $o_1$. 
By Part I of Theorem 1 of  \cite{L2} this disk touches $W$ only at the center of $N_1^*/N_1$.
Denote this center by $c^*$.
Since the point $c^*$ lays on the arc $co_1$, the point $t$ lays on the arc $e_1o_1$ and the only common point of $co_1$ and $e_1o_1$ is $o_1$, we see that $c^*$ is different  from $t$.
Therefore $t$ does not belong to $W$.

Consider the shorter part $P$ of the spherical circle with the center at $e_1$ and radius $\Delta(W)$ between $t$ and $u$.
Since $W$ is closed, $t \notin W$ and $u \in W$, 
we conclude that there exists a point $x \in P$ different from $t$ and not contained in $W$.
Denote by $o$ the point of the arc $o_1o_2$ such that $x\in oe_1$ and by $N$ the hemisphere centered at $o$.
Since $o$ lays strictly between $o_1$ and $o_2$, the hemisphere $N$ supports $W$ strictly between $N_1$ and $N_2$.
 As a consequence, $N$ supports $W$ only at $e_1$. 
Thus by Claim 2 of \cite{L2} the center of $N/N^*$ is at $e_1$.

 Looking to the proof of Part I of Theorem 1 from \cite{L2}, we observe that the center of our $N^*/N$ lays on the arc connecting the center of $N$ with the center of $N/N^*$ in the distance $\Delta (W)$ from the center of $N/N^*$. 
Therefore $x$ is the center of $N^*/N$ and by Claim 2 of \cite{L2} it belongs to the boundary of $W$. 
But this contradicts the fact that $x$ does not belong to $W$.
 This shows that the assumption from the beginning of our proof must be false.
Hence our theorem is proved. 
\end{proof}

The thesis of this Theorem does not hold true for spherical convex bodies of constant width at least $\frac{\pi}{2}$, as we see from the following Example.

\medskip
{\it Example.} Take a spherical regular triangle $abc$ of sides of length $\kappa < \frac{\pi}{2}$ and prolong them by the same distance $\sigma \leq \frac{\pi}{2} - \kappa$ in both ``directions" up to points $d, e, f, g, h, i$, so that $i, a, b, f$ are on a great circle in this order, that $e, b, c, h$ are on a great circle in this order, and that $g, c, a, d$ are on a great circle in this order. 
Provide three pieces of circles of radius $\kappa + \sigma$: with center $a$ from $f$ to $g$, with center $b$ from $h$ to $i$, with center $c$ from $d$ to $e$. 
Moreover three pieces of circles of radius $\sigma$: with center $a$ from $i$ to $d$, with center $b$ from $e$ to $f$, with center $c$ from $g$ to $h$. 
The convex hull $U$ of these six pieces of circles is a spherical body of constant width $\kappa + 2\sigma$. 
In particular, when $\kappa + \sigma = \frac{\pi}{2}$, three boundary circles of $U$ become arcs; namely $de$, $fg$ and $hi$.

\medskip
In \cite{L2} the question is asked if through every boundary point $p$ of a reduced spherical body $R$ a lune $L \supset R$ of thickness $\Delta(R)$ passes with $p$ as the center of one of the two semicircles bounding $L$. 
The example of the regular spherical triangle of thickness less than $\frac{\pi}{2}$ shows that the answer is negative in general. 
The following proposition shows that for bodies of constant width the answer is positive.






\medskip

\begin{thm} \label{question}
Let $W \subset S^2$ be a spherical body of constant width.
For every boundary point $p$ of $W$ there exists a lune $L \supset W$ fulfilling $\Delta (L) = \Delta(W)$ such that $p$ is the center of one of the semicircles bounding $L$.
\end{thm}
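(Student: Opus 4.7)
I would split the argument by the value of $\Delta(W)$, recalling that every body of constant width is reduced, so that Theorem 4 of \cite{L2} produces a lune of the desired form whenever $p$ is an extreme point of $W$. It remains to handle non-extreme $p$.

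When $\Delta(W)<\frac{\pi}{2}$, Theorem \ref{strictly2} gives strict convexity of $W$, so every boundary point is extreme and Theorem 4 of \cite{L2} applies. When $\Delta(W)=\frac{\pi}{2}$, I would pick any supporting hemisphere $G$ of $W$ at $p$ and take $L:=G\cap H(p)$; Proposition \ref{hemi} yields $W\subset H(p)$, so $L$ is a lune containing $W$. Letting $g$ be the center of $G$, the relation $p\in\bd G$ forces $|gp|=\frac{\pi}{2}$, hence $g\in\bd H(p)$, and a short computation with tangent directions shows that $\bd G$ and $\bd H(p)$ meet perpendicularly at the antipodal pair orthogonal to both $g$ and $p$, so $\Delta(L)=\frac{\pi}{2}=\Delta(W)$. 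Moreover, the semicircle $G/H(p)$ consists of the points of $\bd G$ at arc-distance at most $\frac{\pi}{2}$ from $p$ along $\bd G$, whose midpoint is $p$ itself.

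When $\Delta(W)>\frac{\pi}{2}$, Proposition \ref{smooth} gives smoothness of $W$. If $p$ is extreme, apply Theorem 4 of \cite{L2}; otherwise $p$ lies in the interior of a maximal great-circle arc $xy\subset\bd(W)$ with $x,y$ extreme, and smoothness forces the unique supporting hemisphere at every point of the closed arc $xy$ to coincide with the hemisphere $G$ whose bounding great circle contains $xy$. Theorem 4 of \cite{L2} applied to $x$ and $y$ yields lunes $G\cap G_x^*$ and $G\cap G_y^*$ of thickness $\Delta(W)$ with $x$ and $y$ as the respective centers of $G/G_x^*$ and $G/G_y^*$. I would then interpolate: Part III of Theorem 1 of \cite{L2} forces any hemisphere $G^*$ that forms a lune of thickness $\Delta(W)$ with $G$ to support $W$ at a point of $\bd(W)\cap B$, where $B$ is the largest ball in $W$ centered at the pole of $G$. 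Parametrizing such candidates $G^*(t)$ by their contact point $t$ on this arc, smoothness of $W$ makes $t\mapsto G^*(t)$ continuous, hence so is the map sending $t$ to the center of $G/G^*(t)$; this center lies in $\bd G\cap W=xy$ by Claim 2 of \cite{L2}, takes the value $x$ at the parameter producing $G\cap G_x^*$ and $y$ at the one producing $G\cap G_y^*$, so by the intermediate value theorem it equals $p$ for some $t$, and the corresponding lune is the required one.

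The main obstacle is this last case, specifically the verification that the contact arc $\bd(W)\cap B$ is connected and joins the two parameters corresponding to $G_x^*$ and $G_y^*$, so that the intermediate value argument is legitimate; this will rest on the smoothness of $W$ together with Claim 2 of \cite{L2}. The inclusion $W\subset G^*(t)$ is automatic throughout, since $G^*(t)$ is by construction a supporting hemisphere of $W$ at $t$.
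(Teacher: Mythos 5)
Your overall decomposition (extreme points via Theorem 4 of \cite{L2}; strict convexity for $\Delta(W)<\frac{\pi}{2}$; the lune $G\cap H(p)$ for $\Delta(W)=\frac{\pi}{2}$; an interpolation between the lunes at the two endpoints of the boundary arc for $\Delta(W)>\frac{\pi}{2}$) coincides with the paper's, and the first three cases are handled correctly. But in the case $\Delta(W)>\frac{\pi}{2}$ the step you yourself flag as ``the main obstacle'' --- that the contact set $\bd(W)\cap \bd(B)$ contains a connected arc joining the two contact points $t_1,t_2$ of $G_x^*$ and $G_y^*$ --- is not an incidental verification; it is the entire content of the theorem in this case, and you have not supplied an argument for it. Without it the intermediate value theorem has no connected domain to run over, and there is no a priori reason why the family of hemispheres forming lunes of thickness $\Delta(W)$ with $G$ should sweep out all centers between $x$ and $y$: constant width only guarantees that each supporting hemisphere $G^\#$ forms a thin lune with \emph{its own} $\left(G^\#\right)^*$, not with the fixed $G$.

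The paper closes exactly this gap by a contradiction argument you would need to reproduce. Suppose the shorter arc $P$ of $\bd(B)$ between $t_1$ and $t_2$ is not contained in $\bd(W)$; then the corresponding piece of $\bd(W)$ has an extreme point $x\notin P$, hence with $|xk|>\rho=\Delta(W)-\frac{\pi}{2}$, where $k$ is the center of $G$. The great circle through $x$ and $k$ meets $P$ at $y$ and the arc $xy\subset\bd(K)$ (your $e_1e_2$) at a point $q$, giving $|xq|>|yk|+|kq|=\rho+\frac{\pi}{2}=\Delta(W)$. On the other hand, Theorem 4 of \cite{L2} provides a lune $J$ of thickness $\Delta(W)$ containing $W$ with $x$ as the center of one bounding semicircle, and since $\Delta(J)>\frac{\pi}{2}$ every point of $J$ --- in particular $q$ --- lies within distance $\Delta(W)$ of that center. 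This contradiction shows $P\subset\bd(W)$, after which either your IVT argument or the paper's direct construction (take $r$ as the intersection of $P$ with the great circle through $p$ and $k$, and let $L$ be the lune cut out by $G$ and the supporting hemisphere at $r$, whose thickness is $|pr|=\frac{\pi}{2}+\rho=\Delta(W)$) finishes the proof. As it stands, your proposal is an accurate outline with the decisive lemma left unproved, so it is not yet a complete proof.
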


\begin{proof}
If $p$ is an extreme point of $W$, then for this point the thesis is true by Theorem 4 of \cite{L2}.
In particular, the thesis is true for bodies of constant width below $\frac{\pi}{2}$, since then by Theorem \ref{strictly2} all boundary points of $W$ are extreme.

Consider the case when $\Delta(W) = \frac{\pi}{2}$ and $p$ is not an extreme point of $W$.
Clearly, there is a unique hemisphere $G$ supporting $W$ at $p$. 
By Proposition \ref{hemi} we have $W \subset H(p)$. 
Consider the lune $H(p) \cap G$ and note that it contains $W$.
Observe that $p$ is in the distance $\frac{\pi}{2}$ from its 
\break

\begin{center}

\includegraphics[width=3.1in]{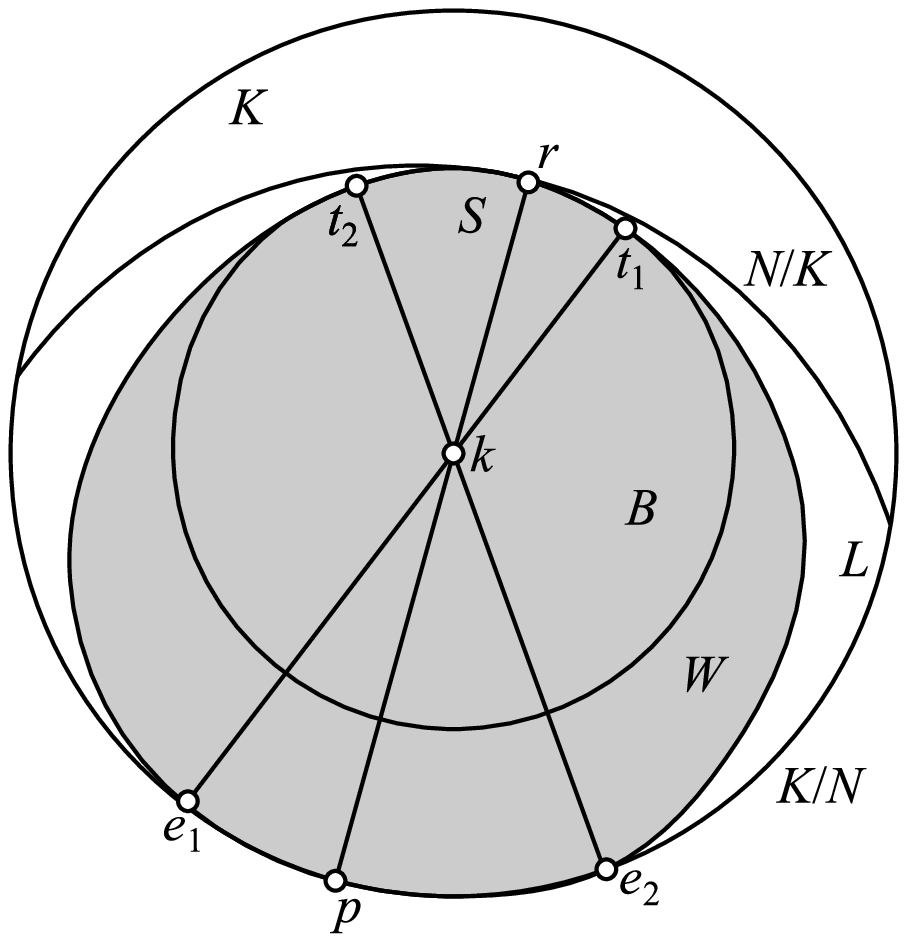} \\ 

{FIGURE 2. Illustration to the proof of Theorem 7.}
\end{center}

\medskip
\noindent
corners, and thus it is the center of a semicircle bounding it. 
Moreover it is in the distance $\frac{\pi}{2}$ from every point of the other semicircle bounding this lune. 
Hence this lune is of thickness $\Delta(W)$.
Thus $H(p) \cap G$ is a lune promised in our theorem.

Finally take into account the case when $\Delta(W) > \frac{\pi}{2}$ and $p$ is not an extreme point of $W$ (the just presented Example shows that it may happen), and let us stay with this assumption up to the end of the proof. 
Clearly, $p$ belongs to a boundary arc $e_1e_2$ of the body $W$, where $e_1$ and $e_2$ are extreme points of $W$ (see Figure 2).

Denote by $K$ the hemisphere supporting $W$ whose bounding great circle contains the arc $e_1e_2$ and by $k$ denote the center of $K$.
From Proposition \ref{smooth} we know that $K$ is the only hemisphere supporting $W$ at each of the points $e_1$ and $e_2$.
By Theorem 4 of \cite{L2} there exist unique hemispheres $K_1^*$ and $K_2^*$ (they play the part of $K^*$ in Theorem 1 of \cite{L2}) supporting $W$ such that the lunes $K \cap K_1^*$ and $K \cap K_2^*$ are of thickness $\Delta (W)$ with $e_1$ as the center of $K/K_1^*$,
and $e_2$ as the center of $K/K_2^*$ ($e_i$ plays the part of $s$ in Part III of Theorem 1 from \cite{L2}). 
Denote by $t_1$ the center of $K_1^*/K$ and by $t_2$ the center of $K_2^*/K$ 
(so $t_i$ plays the part of $t$ in the proof of Part III of Theorem 1 from \cite{L2}).
Clearly the disk $B$ of radius $\rho = \Delta(W) - \frac{\pi}{2}$ and with the center at $k$ (as in Part III of Theorem 1 in \cite{L2}) touches $W$ from inside at the points $t_1$ and $t_2$. 
Moreover, from the proof of Theorem 1 of \cite{L2} and from the earlier established fact that $e_i$ is the center of $K/K_i^*$ and $t_i$ the center of $K_i^*/K$, where $i=1,2$, we obtain that $k$ belongs to both arcs $e_1t_1$ and $e_2t_2$.

Assume for a while that the shorter piece $P$ of $\bd (B)$ between $t_1$ and $t_2$ is not in $\bd (W)$.
Take the corresponding piece of $\bd (W)$ between $t_1$ and $t_2$.  
Clearly, there is an extreme point $x \not \in P$ of this piece.   
The great circle containing $xk$ intersects $P$ at a point $y$, and thus also $e_1e_2$ at a point $q$.
Clearly, $|xq| > |yq| = |yk| + |kq| = \rho + \frac{\pi}{2} = \Delta (W)$. 
By Theorem 4 of \cite{L2} there exist hemispheres $G$ and $H$ such that the lune $J = G \cap H$ of thickness $\Delta (W)$ contains $W$ with $x$ as the center of $H/G$. 
Denote by $z$ the center of $G/H$.
Since $\Delta (J) > \frac{\pi}{2}$, the only farthest point from $x$ on $G$ is just $z$. 
Hence $|xq| \le |xz| = \Delta (J) = \Delta (W)$.
A contradiction with $|xq| > \Delta (W)$ established earlier.
Consequently, $P \subset {\rm bd} (W)$.

Observe that the great circle through $p$ and $k$ intersects $P$ at a point $r$ different from $t_1$ and $t_2$. 
Since $k$ is the center of the hemisphere $K$, we see that $kp$ is orthogonal to the semicircle $K/N$.
Since $k$ is also the center of the disk $B$ and since $r$ is the point at which the great circle $N$ touches $B$, we see that $kr$ is orthogonal to the semicircle $N/K$.
From these two observations we obtain that $pr$ is orthogonal to both the semicircles $K/N$ and $N/K$.
Thus $p$ and $r$ are the centers of the semicircles bounding $L$ (it follows from the obvious fact that the only orthogonal great circle to two different non-opposite meridians is the equator).
Moreover, $L$ is of thickness $|pr| = \Delta (W)$ and hence $L$ is the lune announced in our theorem. 
\end{proof}


\section{Diameter of reduced spherical bodies} 
\label{sec:5}

In the next theorem we prove the conjecture presented in the paragraph before the last of \cite{L3}.
For the proof we need the following two lemmas.
By the way, both may be easily generalized for $S^d$, but for our needs the case when $d=2$ is sufficient.


\begin{lem} \label{max}
Let $L \subset S^2$ be a lune of thickness at most $\frac{\pi}{2}$ whose bounding semicircles are $Q$ and $Q'$.
For every $u, v, z$ in $Q$ such that $v \in uz$ and for every $q \in L$ we have $|qv| \leq \max \{|qu|, |qz| \}$.
\end{lem}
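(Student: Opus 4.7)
The plan is to set up coordinates that exploit the symmetry of $L$, reducing the claim to a one-variable trigonometric unimodality statement. Since the two corners of $L$ are the intersection of the two great circles containing $Q$ and $Q'$, they form a pair of antipodes. I would place them at the north and south poles, so that $Q$ becomes the meridian at longitude $0$ and $Q'$ the meridian at longitude $\theta := \Delta(L) \in (0,\tfrac{\pi}{2}]$. In these coordinates $L$ is the wedge $\{(\phi,\psi) : \psi \in [0,\theta]\}$ and $Q$ is parameterized by latitude $\phi \in [-\tfrac{\pi}{2},\tfrac{\pi}{2}]$.

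For a fixed $q \in L$ with spherical coordinates $(\phi_q,\psi_q)$ and a moving point $p(\phi) \in Q$, the spherical law of cosines gives
$$\cos|qp(\phi)| = \sin\phi_q \sin\phi + (\cos\phi_q \cos\psi_q)\cos\phi = A\sin\phi + B\cos\phi.$$
The key observation is that $B = \cos\phi_q \cos\psi_q \geq 0$, because $\cos\phi_q \geq 0$ (as $\phi_q \in [-\tfrac{\pi}{2},\tfrac{\pi}{2}]$) and $\cos\psi_q \geq 0$ (as $\psi_q \in [0,\theta] \subseteq [0,\tfrac{\pi}{2}]$, using the hypothesis $\Delta(L) \leq \tfrac{\pi}{2}$). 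Writing $A\sin\phi + B\cos\phi = R\sin(\phi+\alpha)$ with $R = \sqrt{A^2+B^2}$ and, when $R>0$, $\sin\alpha = B/R \geq 0$, we get $\alpha \in [0,\pi]$. (If $R = 0$, then $q$ is the pole of the great circle supporting $Q$, so $|qp| \equiv \tfrac{\pi}{2}$ and the lemma is trivial.)

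For $\phi \in [-\tfrac{\pi}{2},\tfrac{\pi}{2}]$, the argument $\phi+\alpha$ ranges over $[\alpha-\tfrac{\pi}{2},\alpha+\tfrac{\pi}{2}]$, an interval of length $\pi$ which, since $\alpha \in [0,\pi]$, contains $\tfrac{\pi}{2}$. On such an interval $\sin$ is strictly unimodal, with a single interior maximum and no interior minimum, so on any sub-interval its minimum is attained at an endpoint. Because $\arccos$ is decreasing, the maximum of $|qp(\phi)| = \arccos\bigl(R\sin(\phi+\alpha)\bigr)$ over any sub-interval is also attained at an endpoint. Applying this to the sub-arc of $Q$ from $u$ to $z$ that contains $v$ yields $|qv| \leq \max\{|qu|,|qz|\}$.

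The step I expect to be the main obstacle is recognizing and exploiting the sign condition $B \geq 0$: geometrically, this asserts that the foot of the perpendicular from $q$ to the great circle supporting $Q$ lies on $Q$ itself rather than on its antipodal semicircle. This is precisely where the assumption $\Delta(L) \leq \tfrac{\pi}{2}$ is used; were the thickness larger, $q$ could lie ``beyond'' the pole of that great circle, and the maximum of $|qp|$ along $Q$ could then occur at an interior point (namely the antipode of the foot of the perpendicular), which would invalidate the conclusion.
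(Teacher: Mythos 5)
Your proof is correct and rests on the same idea as the paper's: the distance from a fixed $q\in L$ to a point moving along $Q$ is unimodal (a unique minimum at the foot of the perpendicular, increasing away from it), so its maximum over any subarc is attained at an endpoint; the hypothesis $\Delta(L)\leq\frac{\pi}{2}$ enters exactly where you say, and your $R=0$ case matches the paper's special case of $q$ being the center of $Q'$ when $\Delta(L)=\frac{\pi}{2}$. The only difference is presentational: you verify the unimodality by an explicit coordinate computation, whereas the paper asserts it synthetically.
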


\begin{proof}
If $\Delta(L)= \frac{\pi}{2}$ and $q$ is the center of $Q'$, then the distance between $q$ and any point of $Q$ is the same, and thus the thesis is obvious.
Consider the opposite case, i.e., when $\Delta(L) < \frac{\pi}{2}$, or $\Delta(L)= \frac{\pi}{2}$ but $q$ is not the center of $Q'$.
Clearly, the closest point $p \in Q$ to $q$ is unique.
Observe that for $x \in Q$ the distance $|qx|$ increases as the distance $|px|$ increases. 
This easily implies the thesis of our lemma. 
\end{proof}

For a convex body $C \subset S^2$ denote by ${\rm diam}(C)$ its diameter and by $E(C)$ the set of its extreme points. 


\begin{lem} \label{diamE}
For every spherical convex body $C \subset S^2$ of diameter at most $\frac{\pi}{2}$ we have ${\rm diam} (E(C)) = {\rm diam}(C)$.
\end{lem}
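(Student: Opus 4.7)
The plan is to start with a pair $p, q \in C$ realizing the diameter $M := \diam(C)$, and successively replace each of them by an extreme point of $C$ without decreasing the distance. Both $p$ and $q$ must lie on $\bd(C)$: if, say, $q$ were in $\inter(C)$, one could prolong the arc $pq$ slightly past $q$ (the extension stays in $C$ near $q$, and lies farther from $p$ since $|pq| \leq \frac{\pi}{2} < \pi$), producing a point of $C$ at distance exceeding $M$. Hence $q \in \bd(C)$.

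Suppose $q \notin E(C)$. Since $q \in \bd(C)$ is not extreme, it lies in the relative interior of a maximal arc $e_1 e_2 \subset \bd(C)$; the endpoints $e_1, e_2$ are necessarily extreme, for otherwise maximality would fail. The goal is to prove $|pq| \leq \max\{|pe_1|, |pe_2|\}$, which will let me replace $q$ by whichever of $e_1, e_2$ is farther from $p$, and this point belongs to $E(C)$.

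To obtain that inequality I would invoke Lemma \ref{max}. Let $G$ be the hemisphere supporting $C$ whose bounding great circle carries the arc $e_1 e_2$, and let $G^*$ be a supporting hemisphere of $C$ realizing ${\rm width}_G(C)$; set $L = G \cap G^*$. Then $C \subset L$, so $e_1, q, e_2 \in \bd(G) \cap L = G/G^*$ and $p \in L$. Lemma \ref{max} then applies with $u = e_1$, $v = q$, $z = e_2$, and the role of the auxiliary point of the lemma played by $p$ — provided $\Delta(L) \leq \frac{\pi}{2}$.

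Verifying this last condition is the only delicate point, and is where I expect the main obstacle to lie. It is handled by Claim 2 of \cite{L2}: the centers of the semicircles $G/G^*$ and $G^*/G$ both belong to $C$, so their distance, which by definition equals $\Delta(L) = {\rm width}_G(C)$, is bounded above by $\diam(C) \leq \frac{\pi}{2}$. Once the inequality $|pq| \leq \max\{|pe_1|, |pe_2|\}$ is secured, $q$ is replaced by the corresponding $e_i \in E(C)$. Running the same argument with the roles of the two diameter endpoints swapped produces an extreme point $p' \in E(C)$ with $|p' e_i| = M$, which yields $\diam(E(C)) \geq M$; the reverse inequality is automatic since $E(C) \subseteq C$.
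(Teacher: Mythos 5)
Your proof is correct and follows essentially the same route as the paper: reduce a non-extreme boundary point to the endpoints of the boundary arc containing it, and apply Lemma \ref{max} in the lune $G\cap G^*$ after checking its thickness is at most $\frac{\pi}{2}$. The only cosmetic difference is that you derive ${\rm width}_G(C)\leq {\rm diam}(C)$ directly from Claim 2 of \cite{L2}, whereas the paper cites this inequality as Theorem 3 of \cite{L3}; both are legitimate.
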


\begin{proof}
Clearly, ${\rm diam} (E(C)) \leq {\rm diam}(C)$.
In order to show the opposite inequality $\diam (C) \leq \diam (E(C))$, thanks to $\diam (C) = \diam (\bd(C))$, it is sufficient to show that $|cd| \leq \diam (E(C))$ for any $c, d \in \bd(C)$. 
If $c, d \in E(C)$, this is trivial.
In the opposite case, at least one of these points does not belong to $E(C)$.
If, say $d \notin E(C)$, then by $d \in \bd(C)$ there are $e,f \in E(C)$ different from $d$ such that $d \in ef$. 
This and $d, e, f \in \bd(C)$ imply that the arc $ef$ is a subset of $\bd(C)$. 

Recall that by Theorem 3 from \cite{L3} we have ${\rm width}_K (C) \leq {\rm diam} (C)$ for every hemisphere $K$ supporting $C$. 
In particular, for the hemisphere supporting $C$ at every point of the arc $ef$. 
Thus by the assumption that ${\rm diam} (C) \leq \frac{\pi}{2}$ we obtain ${\rm width}_K (C) \leq \frac{\pi}{2}$.
Hence we may apply Lemma \ref{max}.
We obtain $|cd| \leq \max \{|ce|, |cf|\}$.

If $c \in E(C)$, from $e, f \in E(C)$ we conclude that $|cd| \leq \diam (E(C))$. 
If $c \notin E(C)$, from $c \in \bd(C)$ we see that there are $g, h \in E(C)$ such that $c \in gh$. 
Similarly as in the consideration of the preceding paragraph we see that $|ec| \leq \max \{|eg|, |eh|\}$ and $|fc| \leq \max \{|fg|, |fh|\}$.
And again by the inequality at the end of the preceding paragraph and by these two just shown inequalities we get $|cd| \leq \max \{|eg|, |eh|, |fg|, |fh|\} \leq \diam (E(C))$, which ends the proof.
\end{proof}

The assumption that ${\rm diam} (C) \leq \frac{\pi}{2}$ is substantial in this lemma, as it follows from the example of any regular triangle of diameter over $\frac{\pi}{2}$. 
The weaker assumption that $\Delta(C) \leq \frac{\pi}{2}$ is not sufficient, which follows from the example of any isosceles triangle $T$ with $\Delta(T) \leq \frac{\pi}{2}$ and the arms longer than $\frac{\pi}{2}$ (so the base shorter than $\frac{\pi}{2}$). 
The diameter of $T$ equals to the distance between the midpoint of the base and the opposite vertex of $T$.
Hence ${\rm diam} (T)$ is over the length of each of the sides.


\begin{thm} \label{ineq} 
For every reduced spherical body $R \subset S^2$ with $\Delta (R) < \frac{\pi}{2}$ we have ${\rm diam}(R) \leq {\rm arc cos} (\cos^2 \Delta (R))$.  
This value is attained if and only if $R$ is the quarter of disk of radius $\Delta(R)$.
If $\Delta (R) \geq \frac{\pi}{2}$, then ${\rm diam}(R) = \Delta (R)$.
\end{thm}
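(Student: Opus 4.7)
The proof splits into two cases. First, when $\Delta(R) \geq \pi/2$, Theorem \ref{constant} gives that $R$ is of constant width $\Delta(R)$, and the equality $\mathrm{diam}(R) = \Delta(R)$ then follows from two inequalities: the lower bound $\mathrm{diam}(R) \geq \Delta(R)$ is Theorem 3 of \cite{L3}, while for the upper bound two diametral points $p, q \in R$ lie in respective supporting hemispheres whose intersection is a lune containing $R$ of thickness at most $|pq|$, hence by constant width equal to $\Delta(R)$.

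For $\Delta(R) < \pi/2$, set $\delta := \Delta(R)$. My plan is to first reduce to the distance between two extreme points, then bound that distance via spherical trigonometry. Proposition \ref{hemi} yields $\mathrm{diam}(R) \leq \pi/2$, so Lemma \ref{diamE} applies and the diameter is attained between extreme points $e_1, e_2 \in R$. Theorem 4 of \cite{L2} supplies, for each $i = 1, 2$, a lune $L_i = M_i \cap M_i^* \supset R$ of thickness $\delta$ with $e_i$ as the center of the semicircle $M_i/M_i^*$; let $e_i'$ denote the center of the opposite semicircle $M_i^*/M_i$, which lies in $R$ by Claim 2 of \cite{L2} and satisfies $|e_i e_i'| = \delta$.

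The core step, which I expect to be the main obstacle, is the inequality $\cos|e_1 e_2| \geq \cos^2 \delta$. The key geometric input is that at each of $e_1, e_1', e_2, e_2'$ the corresponding tangent great circle (respectively $\bd(M_1), \bd(M_1^*), \bd(M_2), \bd(M_2^*)$) supports $R$ there, and the opposite-midpoint arc (e.g.\ $e_1 e_1'$) is perpendicular to this tangent great circle. Since any arc from such a point into another point of $R$ stays in the supporting hemisphere, its angle with the perpendicular arc is at most $\pi/2$. Coupling these four right-angle constraints with the spherical law of cosines in the triangles $e_1 e_1' e_2$, $e_1 e_2 e_2'$ and $e_1' e_2 e_2'$, together with the diameter upper bounds $|e_1' e_2|, |e_1 e_2'|, |e_1' e_2'| \leq \mathrm{diam}(R)$, should yield $\cos \mathrm{diam}(R) \geq \cos^2 \delta$. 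The delicate part is combining these relations so that exactly the constant $\cos^2 \delta$ emerges, rather than a trivial weaker bound; I expect this to require setting up coordinates adapted to $L_1$, writing $e_2 = (\phi, \mu)$ with $\mu \in [0, \delta]$ so that $\cos|e_1 e_2| = \cos \phi \cos \mu$, and then using the constraints from $L_2$ (in particular that $e_2' \in L_1$ and the disk $B(o_2, \pi/2 - \delta)$ centered at the pole of $M_2$ meets $R$ only at $e_2'$) to force $|\phi| \leq \delta$.

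Finally, for the characterization of equality, the quarter of a disk of radius $\delta$ has $\Delta = \delta$, and the two endpoints $u, v$ of its bounding small-circle arc satisfy $|uk| = |vk| = \delta$ with a right angle at the apex $k$; the spherical Pythagorean identity then gives $\cos|uv| = \cos|uk|\cos|vk| = \cos^2 \delta$, realizing the bound. Tracing equality through the main inequality — each of the four angle bounds becoming an equality at $\pi/2$, which via Theorem \ref{main} forces the opposite midpoints $e_1'$ and $e_2'$ to coincide at a common apex $k$ with the two perpendicular arcs $e_1 k$ and $e_2 k$ of length $\delta$ in $\bd(R)$, and the remainder of $\bd(R)$ being the spherical circle of radius $\delta$ about $k$ supplied by Proposition \ref{a1=a2} — forces $R$ to be this quarter of a disk.
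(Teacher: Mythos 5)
Your setup for the case $\Delta(R) < \frac{\pi}{2}$ coincides with the paper's (reduce to extreme points via Proposition \ref{hemi} and Lemma \ref{diamE}, then take the two lunes $L_1, L_2 \supset R$ of thickness $\Delta(R)$ from Theorem 4 of \cite{L2} with $e_i$ and $e_i'$ as centers of their bounding semicircles), but the central inequality $\cos|e_1e_2| \geq \cos^2\Delta(R)$ is precisely what you do not prove. You list ingredients (four right-angle constraints, the law of cosines in three triangles, coordinates adapted to $L_1$) and say they ``should yield'' the bound, conceding that the delicate part is getting exactly the constant $\cos^2\delta$ and that you would still need to force $|\phi|\le\delta$. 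That delicate part is the theorem. The paper's actual argument is a specific construction you would have to supply: $L_1\cap L_2$ is a spherical quadrangle with $e_1, e_2', e_1', e_2$ on consecutive sides, so the arcs $e_1e_1'$ and $e_2e_2'$ cross at a point $g$ with $|ge_2|\le |e_2e_2'| = \Delta(R)$; dropping the perpendicular from $e_2$ onto the arc $e_1e_1'$ at a foot $f$ gives $|fe_2|\le |ge_2|\le\Delta(R)$ and $|fe_1|\le |e_1e_1'| = \Delta(R)$, whence the right-triangle identity $\cos|e_1e_2| = \cos|fe_1|\cos|fe_2| \ge \cos^2\Delta(R)$. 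Nothing in your list of constraints visibly produces this, and the equality characterization (which in the paper reduces to $g=f=e_1'$ and then Proposition \ref{a1=a2}) rests on the same missing argument.

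The case $\Delta(R)\ge\frac{\pi}{2}$ contains a genuine logical reversal. The lune $H\cap H'$ formed by the two supporting hemispheres orthogonal to a diametral arc $pq$ at its endpoints has thickness exactly $|pq|$ and contains $R$; from this, together with constant width (which gives $\Delta(H\cap H')\ge {\rm width}_H(R)=\Delta(R)$ for any such lune), one only obtains $\Delta(R)\le |pq|={\rm diam}(R)$ --- the lower bound you already had. To conclude that the thickness of this lune \emph{equals} $\Delta(R)$ you would need to know that $H'$ realizes ${\rm width}_H(R)$, i.e.\ that this particular lune is minimal, which is unproved and essentially equivalent to what is being shown. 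The paper argues in the opposite direction: it takes the \emph{minimal} lune $L\supset R$ of thickness $\Delta(R)$ having $p$ as the center of a bounding semicircle (supplied by Theorem \ref{question}, which needs constant width), and uses Lemma 3 of \cite{L2} together with $\Delta(L)\ge\frac{\pi}{2}$ to show that every point of $L$, in particular $q$, lies within distance $\Delta(R)$ of $p$. Without an argument of this kind your third claim is not established.
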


\begin{center}

\includegraphics[width=3.75in]{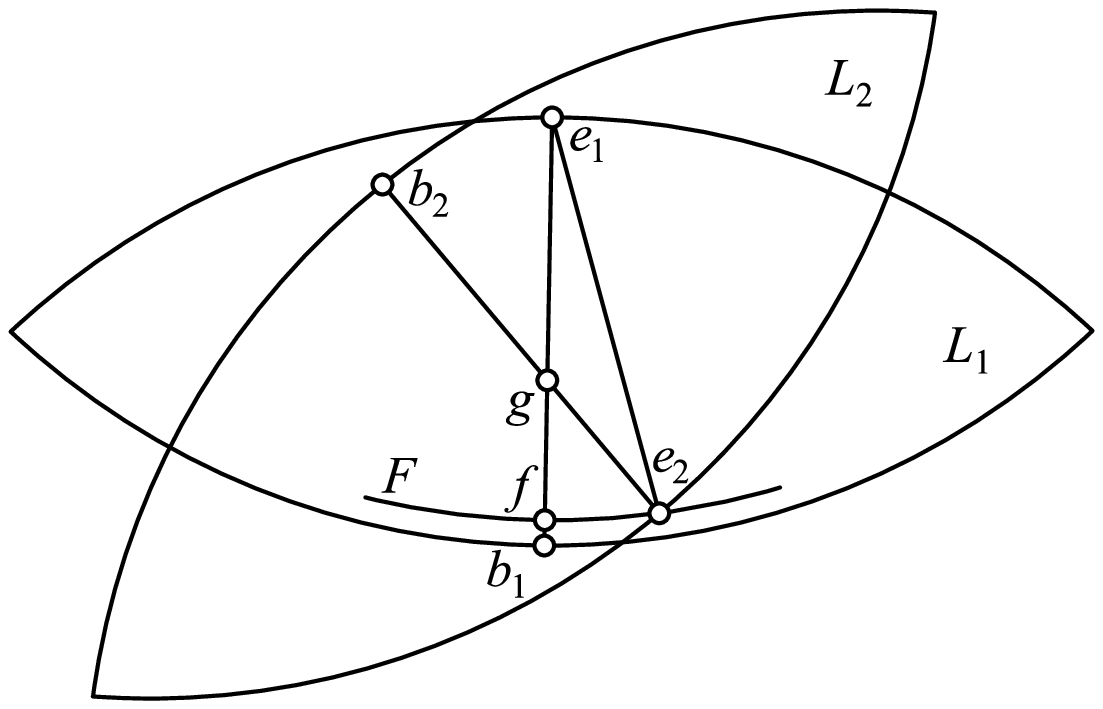} \\ 

\medskip
{FIGURE 3. Illustration to the proof of Theorem 8.}

\end{center}

\begin{proof}
Assume that $\Delta (R) < \frac{\pi}{2}$. 
By Lemma \ref{diamE} it is sufficient to show that the distance between any two points of $E(R)$ is at most ${\rm arc cos} (\cos^2 \Delta (R))$.
Let $e_1, e_2$ be different extreme points of $R$.
Since $R$ is reduced, according to the statement of Theorem 4 from \cite{L2} there exist lunes $L_j \supset R$, where $j=1,2$, of thickness $\Delta(R)$ with $e_j$ as the center of one of two semicircles bounding $L_j$ (see Figure 3). 
Denote by $b_j$, where $j=1,2$, the center of the other semicircle bounding $L_j$.

If the lunes $L_1$ and $L_2$ coincide and $e_1=b_2$, $e_2=b_1$, then 
$|e_1e_2| = \Delta (R) \le \arccos(\cos^2\Delta(R))$.
Otherwise $L_1 \cap L_2$ is a spherical quadrangle with points $e_1,b_2,b_1,e_2$ on the consecutive sides. 
Therefore $e_1b_1$ and $e_2b_2$ intersect at exactly one point. 
Denote it by $g$.
Provide the great circle $F$ orthogonal to $e_1b_1$ which passes through $e_2$.
Since $e_2 \in  L_1$, we see that $F$ intersects $e_1b_1$.
Let $f$ be the intersection point of them. 
From $|e_2b_2| = \Delta (R)$ we see that $|ge_2| \leq \Delta (R)$.
Thus from the right triangle $gfe_2$ we conclude that $|fe_2| \leq \Delta (R)$. 
Moreover, from $|e_1b_1| = \Delta (R)$ and $f \in e_1b_1$ we see that $|fe_1| \leq \Delta (R)$.  
Consequently, from the formula $\cos k = \cos l_1 \cos l_2$ for a right spherical triangle with hypotenuse $k$ and legs $l_1, l_2$ applied to the triangle 
$e_1fe_2$ (again see Figure 3) we obtain $|e_1e_2| \leq {\rm arc cos} (\cos^2 \Delta (R))$. 

Observe that if $\Delta (R)< \frac{\pi}{2}$, then the shown inequality becomes the equality only if $g = f = b_1$ and  $|b_1e_2| = \Delta (R)$. 
In this case, by Proposition \ref{a1=a2}, our body $R$ is a quarter of disk of radius $\Delta(R)$.

 Finally, when we intend to show the last statement of our theorem,
assume that $\Delta (R) \geq \frac{\pi}{2}$.
By Theorem \ref{constant}, the body $R$ is of constant width $\Delta (R)$.

Take a diametral segment $pq$ of $R$. 
Clearly, $p\in \bd (R)$.
Take the lune $L$ from Theorem \ref{question} such that $p$ is the center of a semicircle bounding $L$.
Denote by $s$ the center of the other semicircle $S$ bounding $L$.
By the third part of Lemma 3 of \cite{L2}, we have $|px| < |ps|$ for every $x \not = s$ on $S$.
Hence $|px| \leq |ps|$ for every $x \in S$.
So also $|pz| \leq |ps|$ for every $z \in L$. 
In particular, $|pq|\leq |ps|$.
Since ${\rm diam} (R) = |pq|$ and $\Delta(R) = \Delta(L) = |ps|$, we obtain ${\rm diam} (R) \le \Delta(R)$.

On the other hand, $\Delta (R) \leq {\rm diam} (R)$ by Proposition 1 of \cite{L2}.
Hence the last statement of our theorem is true.\end{proof}

Let us add that an analogical theorem on the diameter of spherical reduced polygons is presented in \cite{L3}.

Now, in this section on the diameter, let us repeat Proposition \ref{hemi} in the following form: {\it for every reduced spherical body with $\Delta (R) \leq \frac{\pi}{2}$ on $S^2$ we have ${\rm diam} (R) \leq \frac{\pi}{2}$}. 
Here is  even a more precise form of it.

\begin{pro} \label{precise}
Let $R \subset S^2$ be a reduced spherical body.
We have $\Delta (R) < {\pi \over 2}$ if and only if ${\rm diam} (R) < {\pi \over 2}$. 
Moreover, $\Delta (R) = {\pi \over 2}$ if and only if ${\rm diam} (R) = {\pi \over 2}$. 
\end{pro}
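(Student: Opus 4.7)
The plan is to derive both biconditionals directly from Theorem \ref{ineq} together with the inequality $\Delta(R) \leq \mathrm{diam}(R)$ from Proposition 1 of \cite{L2}. The whole argument is a clean case analysis, because Theorem \ref{ineq} pins down $\mathrm{diam}(R)$ very tightly in each of the regimes $\Delta(R) < \pi/2$, $\Delta(R) = \pi/2$, $\Delta(R) > \pi/2$.

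First I would handle the forward directions. If $\Delta(R) < \pi/2$, then $\cos \Delta(R) > 0$, hence $\cos^2 \Delta(R) \in (0,1)$, so $\arccos(\cos^2 \Delta(R)) < \pi/2$; by Theorem \ref{ineq} this gives $\mathrm{diam}(R) \leq \arccos(\cos^2 \Delta(R)) < \pi/2$. If $\Delta(R) = \pi/2$, then since the hypothesis $\Delta(R) \ge \pi/2$ is satisfied, the last clause of Theorem \ref{ineq} yields $\mathrm{diam}(R) = \Delta(R) = \pi/2$.

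For the reverse directions I would use the contrapositive on the case analysis above together with Proposition 1 of \cite{L2}. If $\mathrm{diam}(R) < \pi/2$, then $\Delta(R) \leq \mathrm{diam}(R) < \pi/2$ by that proposition, which immediately gives the first ``if" direction. If $\mathrm{diam}(R) = \pi/2$, I would rule out $\Delta(R) < \pi/2$ (it would force $\mathrm{diam}(R) < \pi/2$ by the forward direction just proved) and rule out $\Delta(R) > \pi/2$ (it would force $\mathrm{diam}(R) = \Delta(R) > \pi/2$ by Theorem \ref{ineq}); so $\Delta(R) = \pi/2$ is the only possibility.

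There is essentially no obstacle: every needed estimate is already in place. The only nuance worth double-checking is the strictness in the bound $\arccos(\cos^2 \Delta(R)) < \pi/2$ when $\Delta(R) < \pi/2$, which is what lets the first equivalence propagate from $<$ to $<$ rather than degenerating to $\leq$. Everything else is bookkeeping of the three cases for $\Delta(R)$ relative to $\pi/2$.
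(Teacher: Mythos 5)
Your proposal is correct and follows essentially the same route as the paper: the first equivalence comes from Theorem \ref{ineq} (noting $\arccos(\cos^2 x)<\pi/2$ for $x<\pi/2$) together with $\Delta(R)\le\mathrm{diam}(R)$, and the second follows from the first by a short case analysis. The only cosmetic difference is that for $\Delta(R)=\pi/2\Rightarrow\mathrm{diam}(R)=\pi/2$ you invoke the last clause of Theorem \ref{ineq} where the paper cites Proposition \ref{hemi}; both are valid.
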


\begin{proof}
Applying the first derivative test, we see that $f(x) = {\rm arc cos} (\cos^2 x)$ is an increasing function in the interval $\left[0, \frac{\pi}{2}\right]$.
Moreover, for $x = {\pi \over 2}$ this function accepts the value ${\pi \over 2}$.
So in the interval  $[0, {\pi \over 2})$ it accepts only the values below ${\pi \over 2}$.
Thus by Theorem \ref{ineq}, if $\Delta (R) < {\pi \over 2}$, then ${\rm diam} (R) < {\pi \over 2}$.
From the fact that $\Delta (C) \leq {\rm diam} (C)$ for any spherical convex body $C$, which follows from Theorem 3 and Proposition 1 of \cite{L2}, we obtain the opposite implication. 
The second statement is an easy consequence of Proposition \ref{hemi} and the first statement of our proposition. 
\end{proof}


\begin{thebibliography}{18}

\bibitem {DGK} L. Danzer, B. Gr\"unbaum and V. Klee, Helly's theorem and its relatives, in Proc. of Symp. in Pure Math. vol. VII, Convexity, 1963, pp. 99--180.  

\bibitem {Fe1}
 L. Fejes T\'oth, {\it Lagerungen in der Ebene auf der Kugel und im Raum.} (German) Zweite verbesserte und erweiterte Auflage. Die Grundlehren der mathematischen Wissenschaften, Band 65. Springer-Verlag (Berlin-New York, 1972). 

\bibitem {Fe2} 
L. Fejes T\'oth Toth, Research Problems: Exploring a Planet.
{\it Amer. Math. Monthly} {\bf 80} (1973), 1043–-1044. 

\bibitem{FIN}
O. P. Ferreira, A. N. Iusem and S. Z. N\'emeth, Projections onto convex sets on the sphere, J. Global Optim. {\bf 57} (2013), 663--676. 

\bibitem{GHS}
F. Gao, D. Hug and R. Schneider, Intrinsic volumes and polar sets in spherical space, Homage to Luis Santal\'o. Vol. 1 Math. Notae {\bf 41}  (2001/02), 159--176.

\bibitem {Gr} H. Groemer, Extremal convex sets, {\it Monatsh. Math.} {\bf 96} (1983), 29--39. 

\bibitem{Ha} H. Hadwiger, Kleine Studie zur kombinatorischen Geometrie der Sph\"are, {\it Nagoya Math. J.} {\bf 8} (1955), 45--48.

\bibitem{HN} H. Han and T. Nishimura, Self-dual shapes and spherical convex bodies of constant width $\pi/2$, {\it J. Math. Soc. Japan}, to appear (arXiv:1511.04165v2).

\bibitem{He} E. Heil, {\it Kleinste konvexe K\"orper gegebener Dicke.} Preprint No. 453, Fachbereich Mathematik der TH Darmstadt, 1978.

\bibitem{HM} E. Heil and H. Martini, Special convex bodies, in Handbook of Convex Geometry, {\it North-Holland, Elsevier Science Publishers}, pp. 347--385 (1993) 


\bibitem{L1} M. Lassak, Reduced convex bodies in the plane, {\it Israel J. Math.} {\bf 70} (1990), 365--379. 

\bibitem{L2} M. Lassak, Width of spherical convex bodies, {\it Aequationes Math.} {\bf 89} (2015), 555--567. 

\bibitem{L3} M. Lassak, Reduced spherical polygons, {\it Colloq. Math.} {\bf 138}(2015), 205--216. 

\bibitem{LM} M. Lassak and H. Martini, Reduced convex bodies in Euclidean space -- a survey, {\it Expositiones Math.} {\bf 29} (2011), 204--219. 


\bibitem{LYP} Y. S. Lee, J. M. Yang and I. K. Park, Convex figures on the unit sphere, {\it Far East J. Math. Sci.} {\bf 7} (2002), 191--196 

\bibitem{Le} K. Leichtweiss, Curves of constant width in the non-Euclidean geometry, {\it Abh. Math. Sem. Univ. Hamburg} {\bf 75} (2005), 257--284. 

\bibitem{NS}
T. Nishimura, Y. Sakemi, Topological aspects of Wulff shapes, {\it J. Math. Soc. Japan} {\bf 66} (2014) 89–-109. 

\bibitem {VB} 
G. Van Brummelen, {\it Heavenly mathematics. The forgotten art of spherical trigonometry.} Princeton University Press (Princeton, 2013)

\end{thebibliography}
\end{document}